\documentclass[final,1p,times]{elsarticle}

\usepackage{amssymb}

\usepackage{amsthm}
\usepackage{mathrsfs}

\usepackage{mathptmx}
\usepackage{amsmath,amsfonts}
\usepackage{url,hyperref,multirow}
\usepackage{float}
\usepackage{color}
\usepackage{epsfig,bm,caption2,epstopdf}
\usepackage{float}

\usepackage{diagbox}
\usepackage{setspace}
\usepackage{exscale}
\usepackage{relsize}
\usepackage{booktabs}
\usepackage{ifpdf}

\usepackage{dsfont}

\newtheorem{theorem}{Theorem}[section]
\newtheorem{remark}{Remark}[section]
\newtheorem{corollary}{Corollary}[section]
\newtheorem{lemma}{Lemma}[section]
\newtheorem{example}{Example}[section]

\newtheorem{assumption}{Assumption}

\parskip 0.1cm
\setlength{\textwidth}{15.8cm}
\setlength{\oddsidemargin}{0cm}
\setlength{\textheight}{25.0cm}
\setlength{\topmargin}{-2cm}

\journal{}

\begin{document}
	
	\begin{frontmatter}

\title{Local convergence analysis of a linearized Alikhanov scheme for the time fractional sine-Gordon equation}
\author[OUC]{Chang Hou}
\ead{1546758808@qq.com}

\author[OUC]{Hu Chen \corref{chen}}
\ead{chenhu@ouc.edu.cn} \cortext[chen]{Corresponding author.}

\address[OUC]{School of Mathematical Sciences, Ocean University of China, Qingdao 266100, China}
\begin{abstract}
  This paper investigates the time fractional sine-Gordon equation whose solution exhibits a weak singularity of type $t^{\alpha}$.
  By means of the Alikhanov formula we derive a fully discrete, linearized scheme.
  Using the more general regularity assumption, we derive a sharp truncation-error bound for the fractional derivative.
  Furthermore, we prove a key inequality and a less restrictive stability result that is valid on general graded temporal meshes.
  Consequently, the temporal local convergence order is shown to be $\min\{2,r\}$ in $H^1$-seminorm, where $r$ is the degree of grading; numerical experiments confirm that the optimal rate is already attained as soon as $r=2$.
\end{abstract}

\begin{keyword}
	Time fractional sine-Gordon equation, Local error estimates, Quasi-graded meshes, Alikhanov scheme
\end{keyword}
\end{frontmatter}

 \section{Introduction}
 The sine-Gordon equation was first introduced by Bour in 1862. It has been extensively used to model various physical phenomena\cite{MR4850632}. Compared with the integer-order sine-Gordon equation, the diffusion-wave equations are capable of capturing physical processes whose response interpolates between that of diffusion and that of wave propagation\cite{MR4309151}. In this paper, we consider the time fractional sine-Gordon equation:
	\begin{subequations}
		\label{equation1}
	\begin{align}
		& D_{t}^{\alpha }u(\bm{x},t)-\nu\Delta u(\bm{x},t)+f(u(\bm{x},t))=0,\ (\bm{x},t)\in \Omega\times \left(0,T\right],\\
		&u(\bm{x},0)=\psi(\bm{x}),\ u_t(\bm{x},0)=\phi(\bm{x}),\ \bm{x}\in \Omega,\\
		&u(\bm{x},t)=0,\ (\bm{x},t)\in \partial\Omega\times \left[0,T\right],
	\end{align}
	\end{subequations}
where $\nu$ is a positive constant, $\Omega\subset \mathbb{R}^2$ is an
open bounded spatial domain $(0,L)^2$ with a boundary $\partial\Omega$. $f(u)$ is the function $\kappa^2\sin(u)$. $D_t^\alpha$ denotes the Caputo derivative of order $1<\alpha<2$, which is defined by
\begin{align}
	D_t^{\delta} v(\bm{x},t)=\frac{1}{\Gamma(2-\delta)}\int_{0}^{t}{(t-s)^{1-\delta}\frac{\partial^2 v(\bm{x},s)}{\partial s^2}ds} \, \ for\ 1<\delta<2.
\end{align}

For the diffusion-wave equations, it is often necessary to reduce the order of the fractional derivative term in analysis. Lyu et al. \cite{MR4065754} achieved order reduction by introducing an auxiliary variable $v=u_t$, and established a linearized scheme based on fast $L2$-$1_\sigma$ formula which is shown to have second-order unconditional convergence under discrete $H^1$-norm for nonlinear multi-term diffusion-wave equations.
Applying the same order-reduction method and Alikhanov formula, Du et al. \cite{MR4358469} developed a temporal second-order finite difference scheme; they used the discrete energy technique to prove the unconditional stability of the ADI finite difference scheme and  compact ADI finite difference scheme on uniform time meshes for the linear variable-order  time fractional  wave equations. Instead of discretizing the fractional derivative directly, Huang et al. \cite{MR4146516} exploited the backward-Euler formula based on the equivalent partial integro-differential equations and constructed two discrete schemes that require weaker smoothness of the true solution in time for nonlinear diffusion-wave equations.
However, both of the above works did not account for the solutions' weak singularity. In 2016, Jin et al. \cite{MR3449907} described the regularity of solutions to the linear diffusion-wave equation; under suitable conditions (cf.  Theorem A.2 in \cite{MR3449907}), the solution may satisfy $\|\partial_t^mu\|_{L_2}\le Ct^{\alpha-m}$ for $m\ge 1$.

To overcome the singularity of the solution at the initial time, nonuniform meshes are of greater scientific value. Lyu et al. \cite{MR4483525} argued that the fixed-order reduction technique encounters difficulties when applied to general (nonuniform) meshes, they proposed a novel symmetric order-reduction technique and applied it to develop two linearized discrete schemes for nonlinear diffusion-wave equations, where the L1 and Alikhanov methods are used for temporal discretization, respectively, establishing global error estimates in the sense of $H^2$-norm. Subsequently, this reduction approach has been widely adopted. An et al. \cite{MR4434375} proved the regularity of the exact solution of the linear diffusion-wave equations,  that is  $\|\frac{\partial^lu(\cdot,t)}{\partial t^l}\|_{H^2}\le C(1+t^{\alpha-l})$, and they applied the same order-reduction technique to the time fractional derivative and used a finite element method in space, ultimately deriving an $\alpha$-robust error analysis in the $H^1$-seminorm.
Kundaliya and Chaudhary \cite{MR4641576} develop a linearized, fully discrete method that couples finite element method in space with the L1 scheme on standard graded meshes for a time fractional nonlocal diffusion-wave equation of Kirchhoff type.
Zhang et al. \cite{MR4591717} employed a finite difference method for linear diffusion-wave equation, using the L1 scheme on graded meshes in time and a compact difference scheme in space, and they derived a local temporal error estimate of the fully discrete scheme. However, the local convergence analysis of nonlinear diffusion-wave equations has received little attention.

The time fractional sine-Gordon is a special case of nonlinear diffusion-wave equations.
Huang et al. \cite{MR4801462} applied symmetric order-reduction technique to the time fractional sine-Gordon equation and modified the standard L1 scheme  on standard graded temporal meshes, ultimately deriving pointwise error estimates by means of a stability result.
 It is worth noting that the numerical scheme in \cite{MR4801462} has not been linearized. Moreover, adopting the Alikhanov scheme for discretization may increase the convergence order in time.
For the time fractional sine-Gordon equation with weakly singular solutions, pointwise error estimates of the linearized  Alikhanov scheme on general time meshes are still lacking. In this work, we will fill this gap.

We first introduce a new auxiliary variable $p$, reducing the original equation to a coupled system. The exact solution and the auxiliary variable are assumed to satisfy Assumption \ref{assumption1}. In time, we adopt  Alikhanov scheme on general graded meshes, while in space we use the standard second order central difference.
 Under the more general regularity assumption, we estimate the truncation error of the discrete fractional derivative based on the Alikhanov scheme and we observe that the according coefficient is $\alpha$-robust. Building on our previous work \cite{submittied}, we derived a novel stability result subject to fewer constraints (see Lemma \ref{lemma6}). Then, by further proving an important Lemma \ref{keylemma}, we are able to use the stability result to derive local error estimates.

 The paper is organized as follows.
 Section \ref{sec:full} derives the fully discrete scheme by first rewriting the problem as a symmetric fractional-order system and then applying the Alikhanov formula to the coupled equations.
 In Section \ref{sec:trun} the truncation errors are estimated under general regularity assumptions.
   A more widely applicable stability result is obtained in Section \ref{sec:sta}. An important inequality is established yielding the final error estimates in Section \ref{sec:con}.
 The theoretical findings are corroborated in Section \ref{sec:num} by two numerical experiments that illustrate the sharpness of the local error estimates.

 \textbf{Notation.} Throughout this work, the symbol $C$ denotes a generic positive constant that is independent of the mesh sizes and takes different values in different places. $a\lesssim b$ signifies that there exists a positive constant C such that $a\le Cb$. $a\simeq b$ means that $a\lesssim b$ and $b\lesssim a$.
 \section{Fully discrete scheme}\label{sec:full}
 In this section, we construct a linearized discrete scheme: time is handled by Alikhanov scheme; space is discretized with the finite difference method.
\begin{lemma}(\cite{MR4483525},Lemma 2.1)
	\label{lemma1}
	For $\alpha\in(1,2)$ and $v(t)\in C^1([0,T])\cap C^2((0,T])$, it holds that
	\begin{align}
		D_t^{\alpha}v(t)=D_t^{\frac{\alpha}{2}}\left(D_t^{\frac{\alpha}{2}}v(t)\right)-v'(0)\frac{t^{1-\alpha}}{\Gamma(2-\alpha)}.
	\end{align}
\end{lemma}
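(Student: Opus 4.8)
The plan is to reduce both operators to the Riemann--Liouville fractional integral and exploit its semigroup property. Write $\beta=\alpha/2\in(1/2,1)$ and introduce, for $\mu>0$, the integral operator $I^{\mu}g(t)=\frac{1}{\Gamma(\mu)}\int_0^t(t-s)^{\mu-1}g(s)\,ds$. Since $\beta\in(0,1)$, the half-order Caputo derivative admits the representation $D_t^{\beta}v(t)=I^{1-\beta}v'(t)$, while the full operator is, straight from the definition, $D_t^{\alpha}v(t)=I^{2-2\beta}v''(t)$. First I would set $w(t):=D_t^{\beta}v(t)=I^{1-\beta}v'(t)$ and compute its derivative, since the composition $D_t^{\beta}w=I^{1-\beta}w'$ is what must be unfolded.

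The first key step is the Leibniz-type differentiation rule
\[
\frac{d}{dt}I^{\mu}g(t)=I^{\mu}g'(t)+\frac{g(0)\,t^{\mu-1}}{\Gamma(\mu)},\qquad \mu>0,\ g\in C^1,
\]
which follows from the substitution $s\mapsto t-s$ and differentiating under the integral sign. Applying it with $g=v'$ and $\mu=1-\beta$ gives $w'(t)=I^{1-\beta}v''(t)+\frac{v'(0)\,t^{-\beta}}{\Gamma(1-\beta)}$, and it is precisely the boundary term carrying $v'(0)$ that will produce the correction in the statement.

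Next I would apply the half-order operator once more and split the result:
\[
D_t^{\beta}w(t)=I^{1-\beta}w'(t)=I^{1-\beta}I^{1-\beta}v''(t)+\frac{v'(0)}{\Gamma(1-\beta)}\,I^{1-\beta}\!\left[t^{-\beta}\right].
\]
The semigroup identity $I^{1-\beta}I^{1-\beta}=I^{2-2\beta}$ collapses the first term to $I^{2-2\beta}v''(t)=D_t^{\alpha}v(t)$, while the power rule $I^{\mu}[t^{\gamma}]=\frac{\Gamma(\gamma+1)}{\Gamma(\gamma+\mu+1)}t^{\gamma+\mu}$ with $\gamma=-\beta$, $\mu=1-\beta$ evaluates the second term to $\frac{v'(0)\,t^{1-2\beta}}{\Gamma(2-2\beta)}$. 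Substituting $2\beta=\alpha$ and rearranging then yields the claimed identity.

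The main obstacle is justifying these manipulations under the stated weak regularity $v\in C^1([0,T])\cap C^2((0,T])$, since $v''$ may blow up as $t\to0^+$. I would first verify convergence of every convolution: with $2-2\beta=2-\alpha\in(0,1)$ and a singularity no worse than $v''(s)\sim s^{\alpha-2}$, the integrand of $I^{2-2\beta}v''$ is integrable both at $s=0$ (because $\alpha-2>-1$) and at $s=t$ (because $1-\alpha>-1$). The semigroup step is the delicate one, and rather than differentiating through the singularity I would establish it directly by Fubini on the iterated integral, collapsing the inner factor via the Beta integral $\int_s^t(t-\tau)^{-\beta}(\tau-s)^{-\beta}\,d\tau=B(1-\beta,1-\beta)\,(t-s)^{1-2\beta}$; the Leibniz rule is then handled by differentiating on $(\varepsilon,T]$ and letting $\varepsilon\to0^+$. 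Once these analytic points are dispatched, the remaining algebra is immediate.
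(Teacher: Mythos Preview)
The paper does not supply its own proof of this lemma; it is quoted verbatim from \cite{MR4483525} (Lemma~2.1) and used as a black box to rewrite \eqref{equation1} in the coupled form \eqref{equation3}. There is therefore no in-paper argument to compare against.

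Your argument is correct. The chain $D_t^{\beta}\bigl(D_t^{\beta}v\bigr)=I^{1-\beta}\bigl[(I^{1-\beta}v')'\bigr]=I^{1-\beta}\bigl[I^{1-\beta}v''+\tfrac{v'(0)}{\Gamma(1-\beta)}t^{-\beta}\bigr]$, followed by the semigroup identity $I^{1-\beta}I^{1-\beta}=I^{2-2\beta}$ and the power rule $I^{1-\beta}[t^{-\beta}]=\tfrac{\Gamma(1-\beta)}{\Gamma(2-2\beta)}t^{1-2\beta}$, lands exactly on $D_t^{\alpha}v+\tfrac{v'(0)}{\Gamma(2-\alpha)}t^{1-\alpha}$, and your Fubini/Beta verification of the semigroup step is the clean way to avoid differentiating through the singularity. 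One small caveat: the bare hypothesis $v\in C^{1}([0,T])\cap C^{2}((0,T])$ does not by itself force $v''\in L^{1}(0,t)$, so strictly speaking the identity is asserted only where both sides are defined; your remark about a singularity of type $s^{\alpha-2}$ is really an additional (and in this paper always satisfied, cf.\ Assumption~\ref{assumption1}) integrability assumption rather than a consequence of the stated regularity.
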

Defining $\bar{u}(\bm{x},t)=u(\bm{x},t)-t\phi(\bm{x})$, equation \eqref{equation1} is equivalent to the following expression:
\begin{subequations}
	\label{equation2}
	\begin{align}
		& D_{t}^{\alpha }\bar u(\bm{x},t)-\nu\Delta \bar u(\bm{x},t)+\kappa^2\sin(\bar u(\bm{x},t)+t\phi(\bm{x}))=\nu t\Delta \phi(\bm{x}),\ (\bm{x},t)\in \Omega\times \left(0,T\right],\\
		&\bar u(\bm{x},0)=\psi(\bm{x}),\ \bar u_t(\bm{x},0)=0,\ \bm{x}\in \Omega,\\
		&\bar u(\bm{x},t)=0,\ (\bm{x},t)\in \partial\Omega\times \left[0,T\right].
	\end{align}
\end{subequations}
Let $\beta=\frac{\alpha}{2}$, and  $p(\bm{x},t)=D_t^\beta \bar{u}(\bm{x},t)$. Applying Lemma \ref{lemma1}, one has $D_t^\alpha \bar{u}(\bm{x},t)=D_t^\beta \left(D_t^\beta \bar{u}(\bm{x},t)\right)=D_t^\beta p(\bm{x},t)$. Furthermore, equation \eqref{equation1} can be rewritten as a coupled form:
\begin{subequations}
	\label{equation3}
	\begin{align}
		& D_{t}^{\beta} p(\bm{x},t)-\nu\Delta \bar u(\bm{x},t)+f(\bar u(\bm{x},t)+t\phi(\bm{x}))=\nu t\Delta \phi(\bm{x}),\ (\bm{x},t)\in \Omega\times \left(0,T\right],\\
		&D_{t}^{\beta}\bar u(\bm{x},t)=p(\bm{x},t),\ (\bm{x},t)\in \Omega\times \left(0,T\right],\\
		&\bar u(\bm{x},0)=\psi(\bm{x}),\ p(\bm{x},0)=0,\ \bm{x}\in \Omega,\\
		&\bar u(\bm{x},t)=0,\ p(\bm{x},t)=0,\ (\bm{x},t)\in \partial\Omega\times \left[0,T\right].
	\end{align}
\end{subequations}

\begin{assumption}
	\label{assumption1}
	The solutions $\bar{u}$ and $p$ of  \eqref{equation3} satisfy that $\bar u(\bm{x},t)\in C^{4,1}_{\bm{x},t}(\bar{\Omega}\times [0,T])$,   $p(\bm{x},t)\in C_{\bm{x},t}^{2,0}(\bar{\Omega}\times[0,T])$,
	\begin{align}
		\label{eq1}
		\sum_{0\le j\le 2}\left|\partial_{\bm{x}}^j\frac{\partial^l\bar{u}(\bm{x},t)}{\partial t^l}\right|\le C(1+t^{\alpha-l}),\ for\ l=0,1,2,3,\\
		\label{eq2}
		\sum_{0\le j\le 2}\left|\partial_{\bm{x}}^j\frac{\partial^lp(\bm{x},t)}{\partial t^l}\right|\le C(1+t^{\alpha/2-l}),\ for\ l=0,1,2,3,
	\end{align} where $\bm{x}\in\bar\Omega$ and $t\in (0,T]$. The notation $\partial_{\bm{x}}^j$  is defined by  $\partial_{\bm{x}}^j v=\frac{\partial^{j_1+j_2}v}{\partial x^{j_1}\partial y^{j_2}}$ where $j_1$ and $j_2$ are non-negative constants satisfying $j_1+j_2=j$.
\end{assumption}

Setting $\tau_n=t_n-t_{n-1}$, we consider the quasi-graded temporal meshes $\{t_n\}_{n=0}^N$, i.e.
\begin{align}
	\label{mesh}
	\tau_1\simeq N^{-r},\ t_n\simeq\tau_1n^r,\ \tau_n\simeq\tau_1^{1/r}t_n^{1-1/r},\ for\  n=1,2,\dots, N,
\end{align}
  where $r\ge 1$ represents the degree of grading. Denote $\max_{1\le n\le N}\{\tau_n\}$  by $\tau$. The rectangular spatial domain is uniformly discretized into the mesh
  $\bar\Omega_h=\bigl\{(x_i,y_j)\mid 0\le i,j\le M\bigr\}$,
  with the spatial size $h=L/M$ in the $x$ and $y$ directions.
 Let $\Omega_h=\bar\Omega_h\cap\Omega$, and  $\partial\Omega_h=\bar\Omega_h\cap\partial\Omega$.

In the temporal direction, the discretization is performed at the off-set time points $t_n^*=t_n-(1-\sigma) \tau_{n}$ for $n=1,2,\dots, N$, where $0\le\sigma\le 1$.  Applying linear interpolation, we obtain the approximation $v(t_n^*)\approx v^{n,*}:=\sigma v^n+(1-\sigma)v^{n-1}$. Discretizing equation \eqref{equation3} at time $t_n^*$, yields
\begin{subequations}
	\label{equation4}
	\begin{align}
		& \delta_{t}^{\beta,*} p_{i,j}^{n}-\nu\Delta_h \bar u_{i,j}^{n,*}+F(u_{i,j}^{n,*})=\nu t_n^*\Delta \phi_{i,j}+(R_1)_{i,j}^n,\ for\ (x_i,y_j)\in \Omega_h,1\le n\le N,\\
		&\delta_{t}^{\beta,*}\bar u_{i,j}^n=p_{i,j}^{n,*}+(R_2)_{i,j}^n,\ u_{i,j}^n=\bar{u}_{i,j}^{n}+t_n \phi_{i,j},\ for\ (x_i,y_j)\in \Omega_h,1\le n\le N,\\
		&\bar u_{i,j}^0=\psi_{i,j},\ p_{i,j}^0=0,\ for\ (x_i,y_j)\in \Omega_h,\\
		&\bar u_{i,j}^n=0,\ p_{i,j}^n=0,\ for\ (x_i,y_j)\in\partial\Omega_h, 0\le n\le N,
	\end{align}
\end{subequations}
where $F(u_{i,j}^{n,*}):=f(u_{i,j}^{n-1})+\sigma f'(u_{i,j}^{n-1})(u_{i,j}^n-u_{i,j}^{n-1})$ and $\Delta_h$ is the standard ﬁve-point ﬁnite difference operator to approximate the Laplacian operator $\Delta$.
$\delta_{t}^{\beta,*} v^{n}=\sum_{k=1}^{n}g_{n,k}(v^{k}-v^{k-1})$ is the approximation of the Caputo derivative $D_t^{\beta}v(t_n^*)$ with $\frac{1}{2}<\beta<1$, that is
\begin{align*}
	\delta_{t}^{\beta,*} v^{n}&=\frac{1}{\Gamma(1-\beta)}\sum_{k=1}^{n-1}\int_{t_{k-1}}^{t_{k}}{(t_n^*-s)^{-\beta}\partial_s\Pi_{2,k} v(s)ds}+\frac{1}{\Gamma(1-\beta)}\int_{t_{n-1}}^{t_n^*}{(t_n^*-s)^{-\beta}\partial_s\Pi_{1,n}v(s)ds},
\end{align*}
where $\Pi_{1,k}$ and $\Pi_{2,k}$ denote the linear and quadratic interpolation operators, respectively.
For the Alikhanov scheme to hold, $\sigma$ is required to be $1-\frac{\beta}{2}$. Define the coefficients
 \begin{align*}
 	a_{n,n}&=\frac{\sigma^{1-\beta}}{\Gamma(2-\beta)}\tau_{n+1}^{1-\beta},\ for\ n\ge0,\\
 	a_{n,k}&=\frac{1}{\Gamma(1-\beta)}\int_{t_k}^{t_{k+1}}{(t_{n+1}^*-\eta)^{-\beta}d\eta},\ for\ n\ge 1\ and\ 0\le k\le n-1,\\
 	b_{n,k}&=\frac{1}{\Gamma(1-\beta)}\frac{2}{t_{k+2}-t_k}\int_{t_k}^{t_{k+1}}{(t_{n+1}^*-\eta)^{-\beta}(\eta-t_{k+1/2})d\eta},\ for\ n\ge 1\ and\ 0\le k\le n-1.
 \end{align*}  When $n=1$, one has $g_{1,1}=\tau_1^{-1}a_{0,0}$. For $n\ge 2$, the section 2 in \cite{MR3936261} gives that
\begin{align*}
	g_{n,k}=\begin{cases}
		\tau_k^{-1}(a_{n-1,0}-b_{n-1,0}),\ if\ k=1,\\
		\tau_k^{-1}(a_{n-1,k-1}+b_{n-1,k-2}-b_{n-1,k-1}),\ if\ 2\le k\le n-1,\\
		\tau_k^{-1}(a_{n-1,n-1}+b_{n-1,n-2}),\ if\ k=n.
	\end{cases}
\end{align*}

Neglecting the truncation error terms yield the fully discrete scheme:
\begin{subequations}
	\label{equation5}
	\begin{align}
		& \delta_{t}^{\beta,*} P_{i,j}^{n}-\nu\Delta_h \bar U_{i,j}^{n,*}+F(U_{i,j}^{n,*})=\nu t_n^*\Delta \phi_{i,j},\ for\ (x_i,y_j)\in \Omega_h,1\le n\le N,\\
		&\delta_{t}^{\beta,*}\bar U_{i,j}^n=P_{i,j}^{n,*},\ U_{i,j}^n=\bar{U}_{i,j}^{n}+t_n \phi_{i,j},\ for\ (x_i,y_j)\in \Omega_h,1\le n\le N,\\
		&\bar U_{i,j}^0=\psi_{i,j},\ P_{i,j}^0=0,\ for\ (x_i,y_j)\in \Omega_h,\\
		&\bar U_{i,j}^n=0,\ P_{i,j}^n=0,\ for\ (x_i,y_j)\in\partial\Omega_h, 0\le n\le N.
	\end{align}
\end{subequations}
 Assume that the coefficients $g_{n,k}$ satisfy the following properties:\\
P1: $g_{n,k+1}>g_{n,k}>0$, for $1\le k\le n-1\le N-1$;\\
P2: $(2\sigma-1)g_{n,n}> \sigma g_{n,n-1}$, for $2\le n\le N$;\\
P3: $g_{n,k}\ge \frac{1}{m_c\tau_k}\int_{t_{k-1}}^{t_k}\frac{(t_n-s)^{-\beta}}{\Gamma(1-\beta)}ds$ with $m_c>0$, for $1\le k\le n\le N$;\\
P4: There exists a constant $\rho$ such that  $\max_{1\le k\le N-1}{\tau_k/\tau_{k+1}}\le \rho$.

\section{Truncation errors}\label{sec:trun}
In this section we carry out the analysis of truncation errors under the more general regularity assumption. Because of the mesh conditions \eqref{mesh},  we have $t_{k-1}\simeq t_{k}$ and $\tau_{k-1}\simeq\tau_{k}$ for $k\ge 2$. There exist positive constants $C_m$, $C_l$, $C_r$, $C_a$ and $C_b$ such that the following holds:
\begin{align}
t_{k}\le t_{k+1}\le C_m t_{k},\ C_{l}\tau_{k}\le \tau_{k+1}\le C_r\tau_{k},\  C_{a}\tau_1^{1/r}t_k^{1-1/r}\le\tau_k\le C_{b} \tau_1^{1/r}t_k^{1-1/r},\ for\ k\ge 1.
\end{align}
\begin{lemma}
	\label{r1}
	Assume that $\sigma=1-\frac{\beta'}{2}$ and $\beta'\in(0,1)$. Let $v$ satisfy $\left|\frac{\partial^lv(t)}{\partial t^l}\right|\le C(1+t^{\mu-l})$ with $\mu\in(0,1)\cup(1,2)$ for $t>0$, when $l=0,1,2,3$. For $n=1,2,\dots,N$, one has
	\begin{align}
		\label{equation26}
		\left|D_t^{\beta'}v(t_n^*)-\delta_t^{\beta',*}v^n\right|\le C\tau_1^{\mu-\beta'}(\tau_1/t_n)^{\min\{1+\beta',\frac{3-\beta'}{r}-\mu+\beta'\}}.
	\end{align}
	\begin{proof}
	An estimate for the truncation error incurred by the L1 scheme in discretizing the fractional-derivative term has been estimated in Lemma 3.1 of \cite{2025arXiv250612954K}.  We aim to analyze the error associated with the Alikhanov scheme.
		Define
		\begin{align*}
		\varpi(s):=\begin{cases}
				\Pi_{2,k}v(s)-v(s),\ when\ s\in (t_{k-1},t_k)\ and\ 1\le k\le n-1,\\
				\Pi_{2,n^*}v(s)-v(s)\ when\ s\in (t_{n-1},t_n^*).
			\end{cases}
		\end{align*}  Firstly, the estimate of $\varpi(s)$ is considered for $s\in (t_{k-1},t_k),\ 2\le k\le n-1$.
	\begin{equation}
			\begin{aligned}
			\label{equation14}
			\left|\varpi(s)\right|&=\left|\Pi_{2,k}v(s)-v(s)\right|\\
			&=\left|\frac{v^{(3)}(\zeta_1)}{6}(s-t_{k-1})(s-t_k)(s-t_{k+1})\right|,\ \zeta_1\in[t_{k-1},t_{k+1}]\\
			&\le C(C_r+1)\frac{T^{3-\mu}+1}{C_m^{\mu-3}}t_{k}^{\mu-3}\tau_{k}^3:= Z_1 t_{k}^{\mu-3}\tau_{k}^3\min\{1,\frac{(t_n^*-s)}{\sigma\tau_n}\}.
		\end{aligned}
	\end{equation}
The inequality follows from the conditions $\tau_{k}\simeq \tau_{k+1}$, and $t_{k-1}\simeq t_k$.	
For $s\in (t_{n-1},t_n^*), n\ge2$, using $t_{n-1}\simeq t_n$, it holds that
\begin{equation}
	\begin{aligned}
		\label{equation15}
		\left|\varpi(s)\right|&=\left|\Pi_{2,n^*}v(s)-v(s)\right|\\
		&=\left|\frac{v^{(3)}(\zeta_2)}{6}(s-t_{n-1})(s-t_n^*)(s-t_{n})\right|,\ \zeta_2\in[t_{n-1},t_{n}]\\
		&\le C\frac{T^{3-\mu}+1}{C_m^{\mu-3}} t_{n}^{\mu-3}\tau_{n}^2(t_n^*-s)\\
		&\le C \frac{1}{\sigma}\frac{T^{3-\mu}+1}{C_m^{\mu-3}}t_n^{\mu-3}\tau_{n}^2(t_n^*-s):=Z_2t_{n}^{\mu-3}\tau_{n}^3\min\{1,\frac{(t_n^*-s)}{\sigma\tau_n}\}.
	\end{aligned}
\end{equation}
We set $t_1^{**}=\min\{t_1,t_n^*\}$.  Owing to $\varpi (t_1^{**})=0$, it is easy to get $\varpi(s)=-\int_{s}^{t_1^{**}}\varpi'(t)dt$, for $s\in (0,t_1^{**})$. When $\mu\in(0,1)$, we get
\begin{equation}
	\begin{aligned}
		\label{eqaution10}
		\left|\varpi'(t)\right|&\le \left|\partial_t(\Pi_{2,1^{**}} v(t))\right|+\left|\partial_tv(t)\right|\\
		&\le \left(2\sigma^{-1}+\max\{C_l^{-1},(1-\sigma)^{-1}\}\right)(t_1)^{-1}osc(v,\left[0,t_2\right])+C(1+t^{\mu-1})\\
		&\le \left(2\sigma^{-1}+\max\{C_l^{-1},(1-\sigma)^{-1}\}\right)(t_1)^{-1}osc(v,\left[0,t_2\right])+C(T^{1-\mu}+1)s^{\mu-1},
	\end{aligned}
\end{equation}
where $osc(v,[0,t_2])$ denotes the oscillation of the function $v(t)$ over the interval $[0,t_2]$. Employing $osc(v,[0,t_2])\le \int_{0}^{t_2}\left|v'(t)\right|dt\le C(T^{1-\mu}+1)\int_{0}^{t_2}t^{\mu-1}dt=C\frac{(T^{1-\mu}+1)}{\mu} t_2^{\mu}\le C\frac{(T^{1-\mu}+1)C_m^{\mu}}{\mu} t_1^{\mu}$, we obtain $\left|\varpi(s)\right|\le  C(t_1^{**}-s)(t_1^{\mu-1}+s^{\mu-1})$, where constant $C$ is bounded when $\beta'\rightarrow 1$ or $\mu\rightarrow 1$.
In the alternative case where $\mu\in(1,2)$, \cite{2025arXiv250612954K} introduced a crucial technique, namely, rewriting $\varpi(s)$ as $\Pi_{2,1^{**}}\bar v(s)-\bar v(s)$, where $\bar v(s)=v(s)-v(0)-v'(0)s$. With the aid of $\left|\bar v'(t)\right|=\left|v'(t)-v'(0)\right|\le \int_{0}^{t}\left|v''(s)\right|ds \le C(T^{2-\mu}+1)\int_{0}^{t}s^{\mu-2}ds\le C\frac{(T^{2-\mu}+1)}{\mu-1} t^{\mu-1}$, by analogy with inequality \eqref{eqaution10} , we estimate $|\varpi'(t)|$ once more to obtain:  $\left|\varpi(s)\right|\le C (t_1^{**}-s)t_1^{\mu-1}$ where constant $C$ is bounded when $\beta'\rightarrow 1$ or $\mu\rightarrow 2$. In a conclusion, it holds that $\left|\varpi(s)\right|\le Z_3 (t_1^{**}-s)(t_1^{\mu-1}+s^{\mu-1})$ for $\mu\in(0,1)\cup(1,2)$.

For convenience,	$D_t^{\beta'}v(t_n^*)-\delta_t^{\beta',*}v^n$ is denoted by $r_1^n$. In view of the definitions of $D_t^{\beta'}v(t_n^*)$ and $\delta_t^{\beta',*}v^n$, and equation (4.2) from Reference \cite{MR4085134} , integration by parts yields
\begin{equation}
	\begin{aligned}
		\label{equation11}
		\Gamma(1-\beta')r_1^n&=\sum_{k=1}^{n-1}\int_{t_{k-1}}^{t_k}(t_n^*-s)^{-\beta'}\partial_s\left(v(s)-\Pi_{2,k}v(s)\right)ds+\int_{t_{n-1}}^{t_n^*}(t_n^*-s)^{-\beta'}\partial_s\left(v(s)-\Pi_{2,n^*}v(s)\right)ds\\
		&=-\beta'\sum_{k=1}^{n-1}\int_{t_{k-1}}^{t_k}(t_n^*-s)^{-\beta'-1}\left(v(s)-\Pi_{2,k}v(s)\right)ds-\beta'\int_{t_{n-1}}^{t_n^*}(t_n^*-s)^{-\beta'-1}\left(v(s)-\Pi_{2,n^*}v(s)\right)ds \\
&\qquad -\lim_{\epsilon\rightarrow0}{\epsilon^{-\beta'}\varpi(t_n^*-\epsilon)}\\
		&=\beta'\int_{0}^{t_n^*}(t_n^*-s)^{-\beta'-1}\varpi(s)ds.
	\end{aligned}
\end{equation}
We now proceed by splitting the analysis into two cases, estimating the right-hand side of the equation \eqref{equation11}.
Case $n\ge 2$: Using $\frac{\tau_1}{t_n^*}\ge \frac{\tau_1-s}{t_n^*-s}$ for $0<s<t_1$, we have
\begin{equation}
	\begin{aligned}
		\label{equation12}
		\int_{0}^{t_1}(t_n^*-s)^{-\beta'-1}|\varpi(s)|ds&\le  Z_3 \int_{0}^{t_1}(t_n^*-s)^{-\beta'-1}(t_1-s)(t_1^{\mu-1}+s^{\mu-1})ds\\
		&\le Z_3(\tau_1/t_n^*)^{\beta'+1} \int_{0}^{t_1}(t_1-s)^{-\beta'}(t_1^{\mu-1}+s^{\mu-1})ds\\
		&=Z_3\tau_1^{\mu-\beta'}(\tau_1/t_n^*)^{\beta'+1} \int_{0}^{1}(1-\hat s)^{-\beta'}(1+\hat s^{\mu-1})d\hat s\\
		&=Z_3 \tau_1^{\mu-\beta'}(\tau_1/t_n^*)^{\beta'+1}\left(\frac{1}{1-\beta'}+\frac{\Gamma(\mu)\Gamma(1-\beta')}{\Gamma(\mu+1-\beta')}\right)\\
		&\le Z_3 \sigma^{-\beta'-1} \tau_1^{\mu-\beta'}(\tau_1/t_n)^{\beta'+1}\left(\frac{1}{1-\beta'}+\frac{\Gamma(\mu)\Gamma(1-\beta')}{\Gamma(\mu+1-\beta')}\right).
	\end{aligned}
\end{equation}
Let $\theta:=\frac{1}{2}(\tau_1/t_n^*)^{1/r}\le \frac{1}{2}$.  Applying $C_{a}  \tau_1^{1/r}t_k^{1-1/r}\le \tau_k\le C_{b}  \tau_1^{1/r}t_k^{1-1/r}$ for $k\ge 1$ and
$	t_k^{-1-\beta'/r}\le  s^{-1-\beta'/r},\ for\  s\in(t_{k-1},t_{k}),\ k\ge2$,
  one has
\begin{equation}
	\begin{aligned}
		\label{equation13}
		\int_{t_1}^{t_n^*}(t_n^*-s)^{-\beta'-1}|\varpi(s)|ds
		&\le \max\{Z_1,Z_2\}\sigma^{-1}C_{b}^3\tau_1^{3/r}\max_{n\ge k\ge 2}\{t_k^{1+\mu+\frac{\beta'-3}{r}}\} \int_{t_1}^{t_n^*}(t_n^*-s)^{-\beta'-1}s^{-1-\beta'/r}\min\{1,\frac{(t_n^*-s)}{\tau_n}\}ds\\
		&\le  Z_4\tau_1^{3/r}\max_{n\ge k\ge 2}\{t_k^{1+\mu+\frac{\beta'-3}{r}}\} \int_{t_1}^{t_n^*}(t_n^*-s)^{-\beta'-1}s^{-1-\beta'/r}\min\{(1-\theta)^{1/r-1},\frac{2(t_n^*-s)}{\tau_1^{1/r}s^{1-1/r}}\}ds\\
		&\le Z_4\max_{n\ge k\ge 2}\{t_k^{1+\mu+\frac{\beta'-3}{r}}\}(I_1+I_2),
    \end{aligned}
\end{equation}
where $Z_4=\max\{Z_1,Z_2\}\sigma^{-1}C_{b}^3\max\{1,C_a^{-1}\}$.
We split the interval $(t_1,t_n^*)$ into two sub-intervals and estimate each part separately. If $t_n^*(1-\theta)\le t_1$,  this term $I_1$ vanishes.
\begin{equation}
	 \begin{aligned}
		\label{equation16}
		I_1&=(1-\theta)^{1/r-1}\tau_1^{3/r}\int_{t_1}^{t_n^*(1-\theta)}(t_n^*-s)^{-\beta'-1}s^{-1-\beta'/r}ds\\
		&\le 2\tau_1^{3/r}(t_n^*)^{-\beta'/r-\beta'-1}\int_{t_1/t_n^*}^{(1-\theta)}(1-\hat s)^{-\beta'-1}\hat s^{-1-\beta'/r}d\hat s\\		
		&\le 2\tau_1^{3/r}(t_n^*)^{-\beta'/r-\beta'-1}\left(\frac{2^{1+\beta'}r}{\beta'}(\tau_1/t_n^*)^{-\beta'/r}+\frac{2^{1+\beta'/r}}{\beta'}\theta^{-\beta'}\right)\\
		&= 2\tau_1^{3/r-\beta'/r}(t_n^*)^{-\beta'-1}\left(\frac{2^{1+\beta'}r}{\beta'}+\frac{2^{1+\beta'/r+\beta'}}{\beta'}\right).
	\end{aligned}
\end{equation}
The estimate for $I_2$ is as follows:
\begin{equation}
	 \begin{aligned}
		\label{equation17}
		I_2&=2\tau_1^{2/r}\int_{t_n^*(1-\theta)}^{t_n^*}(t_n^*-s)^{-\beta'-1}s^{-2+\frac{1-\beta'}{r}}(t_n^*-s)ds\\
		&= 2\tau_1^{2/r}(t_n^*)^{-1-\beta'+\frac{1-\beta'}{r}}\int_{(1-\theta)}^{1}(1-\hat s)^{-\beta'}\hat s^{-2+\frac{1-\beta'}{r}}d\hat s\\
		&\le \frac{2^{3-\frac{1-\beta'}{r}}}{1-\beta'} \tau_1^{2/r}(t_n^*)^{-1-\beta'+\frac{1-\beta'}{r}}\theta^{1-\beta'}\\
		&=\frac{2^{2+\beta'-\frac{1-\beta'}{r}}}{1-\beta'} \tau_1^{\frac{3-\beta'}{r}}(t_n^*)^{-1-\beta'}.
	\end{aligned}
\end{equation}
 Furthermore, we get
\begin{equation}
	 \begin{aligned}
		\int_{t_1}^{t_n^*}(t_n^*-s)^{-\beta'-1}|\varpi(s)|ds&\le Z_42^5r\sigma^{-\beta'-1}\max_{n\ge k\ge 2}\{t_k^{1+\mu+\frac{\beta'-3}{r}}\}\tau_1^{\frac{3-\beta'}{r}}\left(\frac{1}{\beta'}(t_n)^{-\beta'-1}+\frac{1}{1-\beta'} (t_n)^{-1-\beta'}\right)\\
		&= Z_5\sigma^{-\beta'-1}\max_{n\ge k\ge 2}\left\{t_n^{1+\beta'}\left(\frac{t_k}{t_n}\right)^{1+\beta'}t_k^{\frac{\beta'-3}{r}+\mu-\beta'}\right\}\tau_1^{\frac{3-\beta'}{r}}(t_n)^{-\beta'-1}\left(\frac{1}{\beta'}+\frac{1}{1-\beta'} \right)\\
		&=Z_5\sigma^{-\beta'-1}\tau_1^{\mu-\beta'}\max_{n\ge k\ge 2}\left\{\left(t_k/t_n\right)^{1+\beta'}(\tau_1/t_k)^{\frac{3-\beta'}{r}-\mu+\beta'}\right\}\left(\frac{1}{\beta'}+\frac{1}{1-\beta'} \right)\\
		&\le Z_5\sigma^{-\beta'-1}\tau_1^{\mu-\beta'}(\tau_1/t_n)^{\min\{1+\beta',\frac{3-\beta'}{r}-\mu+\beta'\}}\left(\frac{1}{\beta'}+\frac{1}{1-\beta'} \right),
	\end{aligned}
\end{equation}
where $Z_5=Z_42^5r$. Therefore, combining the results from \eqref{equation12} and \eqref{equation13}, for $n\ge 2$, we obtain
\begin{align*}
	\Gamma(1-\beta')|r_1^n|&\le Z_3 \sigma^{-\beta'-1} \tau_1^{\mu-\beta'}(\tau_1/t_n)^{\beta'+1}\left(\frac{\beta'}{1-\beta'}+\frac{\beta'\Gamma(\mu)\Gamma(1-\beta')}{\Gamma(\mu+1-\beta')}\right)\\
	&+Z_5\sigma^{-\beta'-1}\tau_1^{\mu-\beta'}(\tau_1/t_n)^{\min\{1+\beta',\frac{3-\beta'}{r}-\mu+\beta'\}}\left(1+\frac{\beta'}{1-\beta'} \right)\\
	&\le (Z_3+Z_5)\sigma^{-\beta'-1}\tau_1^{\mu-\beta'}(\tau_1/t_n)^{\min\{1+\beta',\frac{3-\beta'}{r}-\mu+\beta'\}}\left(1+\frac{\beta'}{1-\beta'} +\frac{\beta'\Gamma(\mu)\Gamma(1-\beta')}{\Gamma(\mu+1-\beta')}\right).
\end{align*}

Case $n=1$: We get
\begin{equation}
	\begin{aligned}
		\beta'\int_{0}^{t_1^*}(t_1^*-s)^{-\beta'-1}|\varpi(s)|ds&\le Z_3\beta' \int_{0}^{t_1^*}(t_1^*-s)^{-\beta'}(t_1^{\mu-1}+s^{\mu-1})ds\\
		&=Z_3
		\beta'\frac{1}{1-\beta'}(t_1^*)^{1-\beta'}t_1^{\mu-1}+Z_3\beta'(t_1^*)^{\mu-\beta'}\int_{0}^{1}(1-\hat s)^{-\beta'}\hat s^{\mu-1}d\hat s\\
		&=Z_3\left(\frac{\beta'}{1-\beta'}\sigma^{1-\beta'} +\frac{\beta'\Gamma(1-\beta')\Gamma(\mu)}{\Gamma(1-\beta'+\mu)}\sigma^{\mu-\beta'}\right)\tau_1^{\mu-\beta'}.
	\end{aligned}
\end{equation}

Combining these two cases, we obtain that for  \( n\ge 1 \),
\begin{align*}
	|r_1^n|\le 4(Z_3+Z_5) \left(\frac{1}{\Gamma(1-\beta')}+\frac{\beta'}{\Gamma(2-\beta')} +\frac{\beta'\Gamma(\mu)}{\Gamma(\mu+1-\beta')}\right)\tau_1^{\mu-\beta'}(\tau_1/t_n)^{\min\{1+\beta',\frac{3-\beta'}{r}-\mu+\beta'\}}.
\end{align*}
The proof is completed.
	\end{proof}
	\begin{remark}
		This proof shows that, when $\mu=\alpha$ or $\mu=\frac{\alpha}{2}$, the constant $C$ in \eqref{equation26} remains bounded as $\alpha\rightarrow2$ and is  therefore $\alpha$-robust.
	\end{remark}
\end{lemma}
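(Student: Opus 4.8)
The plan is to reduce the truncation error to a single weighted integral of the local interpolation error and then bound that integral by separating a singular layer near $t=0$ from the remaining smooth region. Writing $r_1^n:=D_t^{\beta'}v(t_n^*)-\delta_t^{\beta',*}v^n$ and inserting the definitions of the Caputo derivative and of $\delta_t^{\beta',*}$, the error becomes $\tfrac{1}{\Gamma(1-\beta')}$ times a sum of integrals of $(t_n^*-s)^{-\beta'}\partial_s\varpi(s)$, where $\varpi(s)$ denotes the difference between $v$ and its piecewise quadratic (interior subintervals) or linear (final subinterval) interpolant. Integrating by parts on each subinterval and checking that the interface and endpoint boundary terms cancel (the interpolant matches $v$ at the nodes, and $\varpi(t_n^*)=0$ kills the singular endpoint contribution), I would arrive at the clean identity $\Gamma(1-\beta')r_1^n=\beta'\int_0^{t_n^*}(t_n^*-s)^{-\beta'-1}\varpi(s)\,ds$. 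From here the entire problem is to estimate $\varpi$ pointwise and integrate it against the singular kernel $(t_n^*-s)^{-\beta'-1}$.

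The second step is the pointwise estimate of $\varpi$, split into three regimes. On an interior subinterval $(t_{k-1},t_k)$ with $2\le k\le n-1$ I would use the standard cubic interpolation remainder together with the regularity bound $|v^{(3)}|\le C(1+t^{\mu-3})$ and the mesh equivalences $t_{k-1}\simeq t_k$, $\tau_k\simeq\tau_{k+1}$ from \eqref{mesh}, obtaining $|\varpi(s)|\lesssim t_k^{\mu-3}\tau_k^3$; on the final subinterval an analogous argument retains one explicit factor $(t_n^*-s)$. The genuinely delicate regime is the first subinterval $(0,t_1)$, where $v^{(3)}$ is not integrable, so the cubic remainder is useless. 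Here I would exploit that $\varpi$ vanishes at $t_1^{**}=\min\{t_1,t_n^*\}$ to write $\varpi(s)=-\int_s^{t_1^{**}}\varpi'(t)\,dt$, bound $\varpi'$ through the oscillation of $v$ over $[0,t_2]$ plus the derivative bound $|v'|\lesssim t^{\mu-1}$, and---crucially, for $\mu\in(1,2)$---replace $v$ by $v(s)-v(0)-v'(0)s$, which leaves $\varpi$ unchanged (interpolation is exact on affine functions) but renders the relevant derivative integrable. This yields the layer estimate $|\varpi(s)|\lesssim(t_1^{**}-s)(t_1^{\mu-1}+s^{\mu-1})$.

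The third step is to integrate these bounds against the kernel, splitting $\int_0^{t_n^*}$ at $t_1$. On $(0,t_1)$ the layer estimate, combined with the elementary inequality $\tfrac{\tau_1}{t_n^*}\ge\tfrac{\tau_1-s}{t_n^*-s}$ and a rescaling to the unit interval, pulls out the factor $(\tau_1/t_n)^{\beta'+1}$ and hence contributes the exponent $1+\beta'$. On $(t_1,t_n^*)$ I would substitute the grading $\tau_k\simeq\tau_1^{1/r}t_k^{1-1/r}$, factor out $\max_{2\le k\le n}t_k^{1+\mu+(\beta'-3)/r}$, and split the remaining integral at $t_n^*(1-\theta)$ with the tuned choice $\theta=\tfrac12(\tau_1/t_n^*)^{1/r}$; the two resulting integrals $I_1,I_2$ each reduce to Beta-type integrals producing the power $\tau_1^{(3-\beta')/r}$. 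Converting the $t_k$-maximum back into a power of $\tau_1/t_n$ shows that its maximizer sits either in the bulk (yielding $\tfrac{3-\beta'}{r}-\mu+\beta'$) or at the first index $k=2$ (yielding $1+\beta'$), so this region alone already produces the stated $\min$, with the layer integral supplying the matching $1+\beta'$. The case $n=1$ is handled directly from the identity using the layer estimate on the single interval $(0,t_1^*)$.

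The main obstacle is twofold: producing the near-origin estimate of $\varpi$ despite the non-integrability of $v^{(3)}$ (handled by the oscillation argument and the affine subtraction for $\mu\in(1,2)$), and choosing $\theta$ so that $I_1$ and $I_2$ balance to deliver the sharp power $\tau_1^{(3-\beta')/r}$. Establishing the $\alpha$-robustness of the remark then requires careful bookkeeping: the dangerous factors $\tfrac{1}{1-\beta'}$ generated in $I_2$ and in the layer integral are harmless because they combine with the overall prefactor $\tfrac{1}{\Gamma(1-\beta')}$ through $(1-\beta')\Gamma(1-\beta')=\Gamma(2-\beta')$, which stays bounded as $\beta'\to1$, while the residual ratio $\Gamma(\mu)\Gamma(1-\beta')/\Gamma(\mu+1-\beta')$ remains finite in the same limit; together these confirm that the constant stays bounded as $\alpha\to2$.
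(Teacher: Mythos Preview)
Your proposal matches the paper's proof essentially step by step: the same integration-by-parts identity, the same three-regime pointwise estimates of $\varpi$ (including the oscillation argument and the affine subtraction near $t=0$), the same splitting at $t_1$ with the inequality $\tau_1/t_n^*\ge(\tau_1-s)/(t_n^*-s)$, and the same tuned choice $\theta=\tfrac12(\tau_1/t_n^*)^{1/r}$ for the $I_1$/$I_2$ split. One slip to watch: on the final subinterval the Alikhanov operator $\delta_t^{\beta',*}$ literally uses the \emph{linear} interpolant $\Pi_{1,n}$ through $t_{n-1},t_n$, so $\varpi(t_n^*)\ne0$ and the singular boundary term does not vanish as you claim; the paper first invokes the Alikhanov identity (valid precisely for $\sigma=1-\beta'/2$) to replace $\Pi_{1,n}$ by the quadratic interpolant $\Pi_{2,n^*}$ that \emph{does} vanish at $t_n^*$, after which your argument and the extra $(t_n^*-s)$ factor go through.
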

\begin{lemma}
	\label{r3}
	 If $\sigma=1-\frac{\beta}{2}$, $f(\cdot)\in C^2(\mathbb{R})$ and $\left|\frac{\partial^lv(t)}{\partial t^l}\right|\le C(1+t^{\mu-l})$ with $\mu\in(0,1)\cup(1,2)$ for $t>0$, when $l=0,1,2,3$. For $1\le n\le N$, it holds that
	 \begin{align}
	 	\label{r2}
	 	|v^{n,*}-v(t_n^*)|&\le C(\tau_1/t_n)^{2/r}t_n^{\mu},\\
	 	\left|F(v^{n,*})-f(v(t_n^*))\right|&\le C(\tau_1/t_n)^{2/r}t_n^{\mu}.
	 \end{align}
	 \begin{proof}
	 According to Taylor expansion with integral reminder, one has	 	
	 	\begin{align*}
	 		&v(t_n)=v(t_n^*)+v'(t_n^*)(1-\sigma)\tau_n+\int_{t_n^*}^{t_n}{(t_n-s)v''(s)}ds,\\
	 		&v(t_{n-1})=v(t_{n}^*)-v'(t_{n}^*)\sigma\tau_n+\int_{t_{n}^*}^{t_{n-1}}{(t_{n-1}-s)v''(s)}ds.
	 	\end{align*}
	 	Thus, taking a linear combination, we obtain
	 	\begin{align}
	 		\label{eq22}
	 		v^{n,*}-v(t_n^*)=\sigma \int_{t_n^*}^{t_n}{(t_n-s)v''(s)}ds+(1-\sigma)\int_{t_{n}^*}^{t_{n-1}}{(t_{n-1}-s)v''(s)}ds.
	 	\end{align}
For the first integral in \eqref{eq22}, the following inequality holds for all $n\ge 1$, 	
	 	\begin{align*}
	 	\sigma	\left|\int_{t_n^*}^{t_n}{(t_n-s)v''(s)}ds\right|&\le \sigma\int_{t_n^*}^{t_n}{(t_n-s)|v''(s)|}ds\\
	 		&\le C\sigma\int_{t_n^*}^{t_n}{(t_n-s)(1+s^{\mu-2})}ds\\
	 	&	\le C\sigma(1+T^{2-\mu})\int_{t_n^*}^{t_n}{(t_n-s)s^{\mu-2}}ds\\
	 	&	\le C(1+T^{2-\mu})\sigma^{\mu-1}\tau_n^2 t_n^{\mu-2}\\
	 		&\le  C(1+T^{2-\mu})\sigma^{\mu-1}C_b^2(\tau_1/t_n)^{2/r} t_n^{\mu}:=A_1(\tau_1/t_n)^{2/r}t_n^{\mu}.
	 	\end{align*}	 	
	 	For the second integral in \eqref{eq22},  we have
	 	\begin{align*}
	 	(1-\sigma)	\left|\int_{t_{n}^*}^{t_{n-1}}{(t_{n-1}-s)v''(s)}ds\right|&\le (1-\sigma)	\int_{t_{n-1}}^{t_{n}^*}{(s-t_{n-1})|v''(s)|}ds\\
	 		&\le C(1-\sigma)(1+T^{2-\mu})\int_{t_{n-1}}^{t_{n}^*}{(s-t_{n-1})s^{\mu-2}}ds\\
	 		&\le C(1-\sigma)(1+T^{2-\mu})\tau_n^2t_{n-1}^{\mu-2}\\
	 		&\le C(1-\sigma)(1+T^{2-\mu})C_m^{2-\mu}\tau_n^2t_{n}^{\mu-2}\\
	 		&\le  C(1-\sigma)(1+T^{2-\mu})C_m^{2-\mu}C_b^2(\tau_1/t_n)^{2/r}t_{n}^{\mu},
	 	\end{align*}
	 	if $n\ge 2$, and
	 	\begin{align*}
	 			(1-\sigma)	\left|\int_{0}^{t_{1}^*}{sv''(s)}ds\right|&\le 	(1-\sigma)	\int_{0}^{t_{1}^*}{s|v''(s)|}ds\\
	 			&\le 	C(1-\sigma)(1+T^{2-\mu})	\int_{0}^{t_{1}^*}{ss^{\mu-2}}ds\\
	 			&=C(1-\sigma)(1+T^{2-\mu})\frac{(t_1^*)^{\mu}}{\mu}\\
	 			&=C(1-\sigma)(1+T^{2-\mu})\frac{\sigma^\mu}{\mu}(\tau_1/t_1)^{2/r}t_1^\mu,
	 	\end{align*}
	 	if $n=1$. Therefore, there exists a positive constant $A_2$ such that $(1-\sigma)	\left|\int_{t_{n}^*}^{t_{n-1}}{(t_{n-1}-s)v''(s)}ds\right|\le A_2(\tau_1/t_n)^{2/r}t_{n}^{\mu}$ for $n\ge 1$.
	 	
	 	In summary, the proof of inequality \eqref{r2} has been completed.
	 	
 The fact that $f(\cdot)\in C^2(\mathbb{R})$ implies
	\begin{align*}
	\left|F(v^{n,*})-f(v(t_n^*))\right|	&\le \left|f(v(t_n^*))-f(v^{n,*})\right|+\left|f(v^{n,*})-f(v^{n-1})-\sigma f'(v^{n-1})(v^n-v^{n-1})\right|\\
		&\le \left|f'(\xi_1)\right|\left|v(t_n^*)-v^{n,*}\right|+\left|\frac{1}{2}f''(\xi_2)\sigma^2(v^n-v^{n-1})^2\right|\\
		&\le (A_1+A_2)\left|f'(\xi_1)\right|(\tau_1/t_n)^{2/r}t_n^{\mu}+\left|\frac{1}{2}f''(\xi_2)\sigma^2\right|\left|v^n-v^{n-1}\right|^2\\
		&\le A_3\left[(\tau_1/t_n)^{2/r}t_n^{\mu}+\left|v^n-v^{n-1}\right|^2\right],
	\end{align*}
		where $\xi_1$ is between $v(t_n^*)$ and $v^{n,*}$, $\xi_2$ is between $v^{n-1}$ and $v^{n,*}$. Here, the constant $A_3$  is independent of mesh sizes and variable $n$. Next, we provide an estimate for $|v^n-v^{n-1}|$.
	\begin{align*}
		|v^n-v^{n-1}|&=\begin{cases}
			|v'(\xi_3)|\tau_n\le C\tau_n(1+\xi_3^{\mu-1})\le C\max\{1,C_m^{1-\mu}\}\tau_n(1+t_{n}^{\mu-1}),\ for\ n\ge2,\\
			|\int_{t_{0}}^{t_1}{v'(s)ds}|\le C\int_{t_{0}}^{t_1}{1+s^{\mu-1}ds}\le C(t_1+\frac{1}{\mu}t_1^{\mu}),\ for\ n=1,
		\end{cases}\\
		&\le  \left[2C\max\{1,C_m^{1-\mu}\}+C(1+\frac{1}{\mu})\right]\max\{1,T^{\mu-1},T^{1-\mu}\}(\tau_n/t_n)t_{n}^{\min\{\mu,1\}}\le A_4(\tau_1/t_n)^{1/r}t_{n}^{\min\{\mu,1\}},
	\end{align*}
	where $\xi_3$ is between $t_{n-1}$ and $t_{n}$. $A_4$ is $\left[2C\max\{1,C_m^{1-\mu}\}+C(1+\frac{1}{\mu})\right]\max\{1,T^{\mu-1},T^{1-\mu}\}C_b$. Therefore one has
	\begin{align*}
	\left|F(v^{n,*})-f(v(t_n^*))\right|	&\le A_3\left[(\tau_1/t_n)^{2/r}t_n^{\mu}+A_4^2(\tau_1/t_n)^{2/r}t_n^{\min\{2,2\mu\}}\right]\\
	&\le A_3 \left(1+A_4^2T^{\min\{2-\mu,\mu\}}\right)(\tau_1/t_n)^{2/r}t_n^{\mu}.
	\end{align*}
	The proof of the lemma is complete.
	 \end{proof}
\end{lemma}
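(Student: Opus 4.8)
The plan is to prove the two bounds separately, exploiting the fact that $t_n^*=\sigma t_n+(1-\sigma)t_{n-1}$ is exactly the node at which the linear interpolant $v^{n,*}=\sigma v(t_n)+(1-\sigma)v(t_{n-1})$ reproduces the first-order Taylor data of $v$ at $t_n^*$. First I would expand both $v(t_n)$ and $v(t_{n-1})$ about $t_n^*$ via Taylor's theorem with integral remainder, retaining the explicit $v''$ integrals. Forming the convex combination with weights $\sigma$ and $1-\sigma$, the value term and the first-derivative term cancel, because the first-order coefficient $\sigma(t_n-t_n^*)+(1-\sigma)(t_{n-1}-t_n^*)$ is $\sigma(1-\sigma)\tau_n-\sigma(1-\sigma)\tau_n=0$. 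Only the two integral remainders survive, so inequality \eqref{r2} reduces to estimating $\sigma\int_{t_n^*}^{t_n}(t_n-s)v''(s)\,ds$ and $(1-\sigma)\int_{t_{n-1}}^{t_n^*}(t_{n-1}-s)v''(s)\,ds$.

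For each remainder I would insert the regularity bound $|v''(s)|\le C(1+s^{\mu-2})\le C(1+T^{2-\mu})s^{\mu-2}$. On a single subinterval the singular weight $s^{\mu-2}$ is largest at the left endpoint, so for $n\ge2$ I would use $t_{n-1}\simeq t_n$ to bound $s^{\mu-2}\lesssim t_n^{\mu-2}$, pull this factor out, and use $\int(t_n-s)\,ds\simeq\tau_n^2$ to obtain the generic quantity $\tau_n^2 t_n^{\mu-2}$. The mesh relation $\tau_n\simeq\tau_1^{1/r}t_n^{1-1/r}$ from \eqref{mesh} then gives $\tau_n^2 t_n^{\mu-2}\simeq\tau_1^{2/r}t_n^{\,\mu-2/r}=(\tau_1/t_n)^{2/r}t_n^{\mu}$, which is precisely the claimed rate. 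The case $n=1$ must be handled separately, since $t_0=0$ lies in the support of the singularity; there I would compute $\int_0^{t_1^*}s\cdot s^{\mu-2}\,ds=\int_0^{t_1^*}s^{\mu-1}\,ds=(t_1^*)^{\mu}/\mu$, finite for all $\mu>0$, and again recover the target after using $t_1^*=\sigma t_1$.

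For the nonlinear bound I would split it by the triangle inequality into $|f(v(t_n^*))-f(v^{n,*})|$ and $|f(v^{n,*})-F(v^{n,*})|$. The first piece is controlled by the mean value theorem together with the bound just obtained, giving $|f'(\xi_1)|\,|v(t_n^*)-v^{n,*}|\lesssim(\tau_1/t_n)^{2/r}t_n^{\mu}$. The second piece is a second-order Taylor remainder: since $F(v^{n,*})=f(v^{n-1})+\sigma f'(v^{n-1})(v^n-v^{n-1})$ is the linearisation of $f$ at $v^{n-1}$ evaluated at $v^{n,*}=v^{n-1}+\sigma(v^n-v^{n-1})$, the error equals $\frac{1}{2}f''(\xi_2)\sigma^2(v^n-v^{n-1})^2$, and as $f\in C^2(\mathbb{R})$ it remains only to control $|v^n-v^{n-1}|$. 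Using the mean value theorem for $n\ge2$ (and a direct integral for $n=1$) with the regularity assumption yields $|v^n-v^{n-1}|\lesssim\tau_n(1+t_n^{\mu-1})\lesssim(\tau_1/t_n)^{1/r}t_n^{\min\{\mu,1\}}$, whose square is again $\lesssim(\tau_1/t_n)^{2/r}t_n^{\mu}$ after absorbing powers of $T$.

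The main obstacle I anticipate is the bookkeeping near $t=0$: the $n=1$ interval carries the full $s^{\mu-2}$ (respectively $s^{\mu-1}$) singularity, so the ``freeze the weight at the left endpoint'' device used for $n\ge2$ fails and the singular weight must be integrated explicitly. Keeping the resulting constants uniform is the delicate part, in particular ensuring they stay bounded as $\mu\to1$ and $\mu\to2$ so that the estimate remains $\alpha$-robust in the intended applications $\mu=\alpha$ and $\mu=\alpha/2$. The remaining manipulations are routine applications of the mesh comparabilities collected in \eqref{mesh} and at the start of Section \ref{sec:trun}.
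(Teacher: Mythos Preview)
Your proposal is correct and follows essentially the same route as the paper: the same Taylor expansion about $t_n^*$ with integral remainder, the same cancellation of the first-order term, the same use of $|v''(s)|\le C(1+T^{2-\mu})s^{\mu-2}$ together with $t_{n-1}\simeq t_n$ for $n\ge2$ and the explicit integration $\int_0^{t_1^*}s^{\mu-1}\,ds$ for $n=1$, and the same triangle-inequality split of the nonlinear term into a mean-value piece and a quadratic Taylor remainder $\tfrac12 f''(\xi_2)\sigma^2(v^n-v^{n-1})^2$. One small clarification: for the integral over $[t_n^*,t_n]$ no separate $n=1$ argument is needed, since $s\ge t_n^*=\sigma\tau_n>0$ there; the special $n=1$ treatment is required only for the integral over $[t_{n-1},t_n^*]$, exactly as in the paper.
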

\section{Stability result}\label{sec:sta}
\begin{lemma}(comparison principle, Corollary 4.1, \cite{submittied})
	\label{lemma2}
Set $\frac{1}{2}\le \sigma\le 1$ and  $0<\beta'<1$.	Let the temporal meshes satisfy $\lambda_1\tau^{\beta'}\le \frac{1}{2\Gamma(2-\beta')}$ and property P1 hold for the coefficients of $\delta_t^{\beta',*}v^n$. It holds that
\begin{align*}
	\begin{cases}
		(\delta_t^{\beta',*}-\lambda_1)v_1^j-\lambda_2v_1^{j-1}\le (\delta_t^{\beta',*}-\lambda_1)v_2^j-\lambda_2v_2^{j-1},\\
		v_1^0\le v_2^0,\ for\ all\ j\ge1,
	\end{cases}\Rightarrow v_1^j\le v_2^j,\ for\  j\ge0,
\end{align*}
 where $\lambda_1,\lambda_2\ge 0$.
\end{lemma}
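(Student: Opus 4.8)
The plan is to argue by a discrete comparison/maximum principle via induction. Set $w^j:=v_2^j-v_1^j$. Since $\delta_t^{\beta',*}$ is linear in its argument, subtracting the two hypotheses collapses them to the single condition
\begin{equation*}
(\delta_t^{\beta',*}-\lambda_1)w^j-\lambda_2 w^{j-1}\ge 0,\qquad j\ge 1,
\end{equation*}
together with $w^0\ge 0$, while the desired conclusion $v_1^j\le v_2^j$ is exactly $w^j\ge 0$. I would therefore prove $w^j\ge 0$ by induction on $j$, the base case $w^0\ge 0$ being given.

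First I would unfold the operator and sum by parts. Writing $\delta_t^{\beta',*}w^n=\sum_{k=1}^n g_{n,k}(w^k-w^{k-1})$ and applying Abel summation yields
\begin{equation*}
\delta_t^{\beta',*}w^n = g_{n,n}w^n - g_{n,1}w^0 - \sum_{k=1}^{n-1}(g_{n,k+1}-g_{n,k})w^k .
\end{equation*}
Substituting into the hypothesis and isolating the top index gives
\begin{equation*}
(g_{n,n}-\lambda_1)\,w^n \ge g_{n,1}w^0 + \sum_{k=1}^{n-1}(g_{n,k+1}-g_{n,k})w^k + \lambda_2 w^{n-1}.
\end{equation*}
Property P1 is precisely what makes every coefficient on the right nonnegative, since $g_{n,1}>0$ and $g_{n,k+1}-g_{n,k}>0$, while $\lambda_2\ge 0$. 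Hence, under the induction hypothesis $w^0,\dots,w^{n-1}\ge 0$, the entire right-hand side is $\ge 0$.

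It then remains to check that the scalar multiplying $w^n$ is \emph{strictly} positive, $g_{n,n}-\lambda_1>0$; granted this, the induction closes, as a nonnegative quantity divided by a positive number is nonnegative, giving $w^n\ge 0$. To secure positivity I would evaluate the diagonal coefficient from its definition: $g_{1,1}=\frac{\sigma^{1-\beta'}}{\Gamma(2-\beta')}\tau_1^{-\beta'}$ and, for $n\ge 2$, $g_{n,n}=\frac{\sigma^{1-\beta'}}{\Gamma(2-\beta')}\tau_n^{-\beta'}+\tau_n^{-1}b_{n-1,n-2}$. Using $\sigma\ge\frac12$ and $0<\beta'<1$ one has $\sigma^{1-\beta'}\ge 2^{-(1-\beta')}>\frac12$, so that $g_{n,n}\ge\frac{1}{2\Gamma(2-\beta')}\tau_n^{-\beta'}\ge\frac{1}{2\Gamma(2-\beta')}\tau^{-\beta'}$ because $\tau_n\le\tau$. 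The mesh restriction $\lambda_1\tau^{\beta'}\le\frac{1}{2\Gamma(2-\beta')}$ then gives $\lambda_1\le\frac{1}{2\Gamma(2-\beta')}\tau^{-\beta'}\le g_{n,n}$, and the strict inequality $\sigma^{1-\beta'}>\frac12$ upgrades this to $g_{n,n}-\lambda_1>0$.

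I expect the main obstacle to be justifying the lower bound $g_{n,n}\ge\frac{\sigma^{1-\beta'}}{\Gamma(2-\beta')}\tau_n^{-\beta'}$, i.e.\ showing that the quadratic-correction term $b_{n-1,n-2}$ is nonnegative. This does not follow from the sign of the factor $(\eta-t_{n-3/2})$ alone, which is antisymmetric about the subinterval midpoint; instead I would exploit that the kernel $(t_n^*-\eta)^{-\beta'}$ is increasing in $\eta$ on $[t_{n-2},t_{n-1}]$ (valid since $t_{n-1}<t_n^*=t_{n-1}+\sigma\tau_n$), so it assigns more weight to the right half where $\eta-t_{n-3/2}>0$ than to the left half where it is negative, forcing $\int_{t_{n-2}}^{t_{n-1}}(t_n^*-\eta)^{-\beta'}(\eta-t_{n-3/2})\,d\eta\ge 0$ and hence $b_{n-1,n-2}\ge 0$. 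Alternatively, property P3 could be invoked to bound $g_{n,n}$ from below directly, at the cost of carrying the extra constant $m_c$. The remaining steps are routine bookkeeping.
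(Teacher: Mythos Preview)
Your argument is correct. The paper itself does not supply a proof of this lemma; it is quoted verbatim as Corollary~4.1 of the authors' companion work~\cite{submittied}, so there is nothing in the present paper to compare against. The induction-via-Abel-summation route you take is the standard way such discrete comparison principles are established, and each step checks out: property~P1 makes all the off-diagonal weights $g_{n,k+1}-g_{n,k}$ and $g_{n,1}$ positive, so the right-hand side is nonnegative under the induction hypothesis; the bound $g_{n,n}\ge\frac{\sigma^{1-\beta'}}{\Gamma(2-\beta')}\tau_n^{-\beta'}$ follows from $b_{n-1,n-2}\ge 0$, which your monotone-kernel symmetry argument handles cleanly (indeed the kernel $(t_n^*-\eta)^{-\beta'}$ is strictly increasing on $[t_{n-2},t_{n-1}]$, so the integral against the odd factor $\eta-t_{n-3/2}$ is strictly positive); and then $\sigma^{1-\beta'}>\tfrac12$ for $\sigma\ge\tfrac12$, $\beta'\in(0,1)$ combined with the mesh restriction $\lambda_1\tau^{\beta'}\le\frac{1}{2\Gamma(2-\beta')}$ yields the strict positivity $g_{n,n}-\lambda_1>0$ needed to close the induction. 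The alternative you mention---invoking P3 for a direct lower bound on $g_{n,n}$---would also work but is unnecessary here since P3 is not among the hypotheses of the lemma.
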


\begin{lemma}
		\label{lemma3}
		Let $0<\beta'<1$ and $\sigma=1-\frac{\beta'}{2}$. Suppose that $\tau$ is sufficiently small and property P1 holds for the coefficients of $\delta_t^{\beta',*}v^n$. For $\gamma\in \mathbb{R}$ and $r\ge 1$, it holds that
		\begin{align}
			\label{eq3.2}
			\begin{cases}
				\delta_t^{\beta',*} v^j\lesssim (\tau_1/t_j)^{\gamma+1}\\
				\forall\ j\ge 1, v^0=0
			\end{cases}\Rightarrow  v^j\lesssim V(j,\gamma):= \mathcal{M}_{\gamma}(t_j)\tau_1 t_j^{\beta'-1}(\tau_1/t_j)^{\min\{0,\gamma\}},
		\end{align}
		where $\mathcal{M}_\gamma(t_j)=1+\ln(t_j/\tau_1)$ for $\gamma=0$ and $\mathcal{M}_\gamma(t_j)=1$ for $\gamma\neq0$.
		\begin{proof}
		 Set $\hat\gamma=\min\left\{\beta',\frac{1-\beta'}{r}\right\}$, and $\mathbb{B}(s;t_{p_k}):=\min\{(s/t_{p_k})t_{p_k}^{\beta'-1},s^{\beta'-1}\}$.
	We first prove the case when $\gamma\le \hat\gamma$.	We construct a barrier function $U_K(s)$ satisfying
		\begin{align*}
			U_K(s):=\sum_{k=0}^{K}w_k\mathbb{B}(s,t_{p_k}),
		\end{align*}
		where $p_k:=2^kp$,   $w_k:=(t_p/t_{p_k})^{\gamma}$ and $p\ge 2$ is a sufficiently large constant.
				
		It has already been estimated in Reference \cite{2025arXiv250612954K} that
		\begin{align}
			\Gamma(1-\beta')D_t^{\beta'}\mathbb B(s,t_{p_k})\ge \begin{cases}
				(1-\beta')(t_{p_k}/s)^{\beta'}s^{-1},\ for\ s>t_{p_k}>0,\\
				(1-\beta')^{-1}(s/t_{p_k})^{1-\beta'}t_{p_k}^{-1},\ for\ 0<s\le t_{p_k}.
			\end{cases}
		\end{align}		
	 Suppose that $t_{p_k}< t_j^*\le t_{p_{k+1}}$ and $0\le k\le K$. 	According to mesh condition\eqref{mesh}, there exists a constant $C_*$ such that $t_{p_k}/t_j^*\ge t_{p_k}/t_{p_{k+1}}\ge C_*2^{-r}$, which implies that
		\begin{align*}
			D_t^{\beta'}U_K(t_j^*)&\ge w_k D_t^{\beta'}\mathbb{B}(t_j^*,t_{p_k})\\
			&\ge w_k(\Gamma(1-\beta'))^{-1}(1-\beta')(t_{p_k}/t_j^*)^{\beta'}(t_j^*)^{-1}\\
			&\ge (C_*2^{-r})^{\beta'-\gamma}(\Gamma(1-\beta'))^{-1}(1-\beta')(t_p/t_j^*)^\gamma(t_j^*)^{-1}\\
			&\ge C \frac{1-\beta'}{\Gamma(1-\beta')}(t_p/t_j)^\gamma(t_j)^{-1}.
		\end{align*}		
	Next, we consider the remaining case $0<t_j^*\le t_p$. Using $t_j^*/t_p\ge t_1^*/t_p$, one has
	\begin{align*}
		D_t^{\beta'}U_K(t_j^*)&\ge w_0D_t^{\beta'}\mathbb{B}(t_j^*,t_p)\\&\ge (\Gamma(2-\beta'))^{-1}(t_j^*/t_p)^{1-\beta'}t_p^{-1}\\&=(\Gamma(2-\beta'))^{-1}(t_p/t_j^*)^\gamma(t_j^*)^{-1}(t_j^*/t_p)^{2+\gamma-\beta'}\\
		&\ge (\Gamma(2-\beta'))^{-1}(t_p/t_j^*)^\gamma (t_j^*)^{-1}(\sigma C_*p^{-r})^{\max\{0,\gamma-\beta'+2\}}.
	\end{align*}
	Combining the two cases above, we conclude that:
	\begin{align}
			D_t^{\beta'}U_K(t_j^*)&\ge C (t_p/t_j)^\gamma(t_j)^{-1},\ for\ 0<t_j^*\le t_{p_{K+1}}.
	\end{align}
	 Note that $\mathbb{B}(s,t_{p_k})$ is a linear function for $s\le t_{p_k}$, so $\delta_{t}^{\beta',*}U_K^j=D_t^{\beta'}U_K(t_j^*)$ for $j\le p$. Next we focus on estimate $\left|\delta_{t}^{\beta',*}U_K^j-D_t^{\beta'}U_K(t_j^*)\right|$ and ensure that it is less than $\frac{1}{2}D_t^{\beta'}U_K(t_j^*)$ for all $p<j\le p_{K+1}$.
	
	  According to inequalities \eqref{equation14} and \eqref{equation15}, we find that	for $s\in (t_{k-1},t_{k})$ and $p_n+1\le k\le j-1$,
	 \begin{align*}
	 	\left|(\Pi_{2,k}\mathbb{B}-\mathbb{B})(s,t_{p_n})\right|\le\frac{1}{6} (C_r+1)\tau_k^3 \left|\mathbb B'''(t_{k-1},t_{p_n})\right|\le\frac{1}{6} (C_r+1) (1-\beta')(2-\beta')(3-\beta')C_m^{4-\beta'}\tau_k^3t_{k}^{\beta'-4}.
	 \end{align*}
	 When $s\in (t_{j-1},t_{j}^*)$ and $p_n<j$, it holds that
	 \begin{align*}		\left|(\Pi_{2,j^*}\mathbb{B}-\mathbb{B})(s,t_{p_n})\right|\le \frac{1}{6}\tau_j^2(t_j^*-s)\left|\mathbb B'''(t_{j-1},t_{p_n})\right|\le\frac{1}{6}(1-\beta')(2-\beta')(3-\beta')C_m^{4-\beta'}\tau_j^2(t_j^*-s)t_{j}^{\beta'-4}.
	 \end{align*}
	  For convenience, we denote by $\mathbb B^{\pi}$ the interpolating function of $\mathbb B$. Determine the value of $J$ from the relation $p_J<j\le p_{J+1}$.
	If $s\in (t_{p_n},t_j^*)$, $B(s,t_{p_n})=s^{\beta'-1}$ is independent of $n$. So it holds that
	\begin{align*}
		\left|\delta_{t}^{\beta',*}U_K^j-D_t^{\beta'}U_K(t_j^*)\right|&\le \sum_{n=0}^{J}w_n\left|\frac{\beta'}{\Gamma(1-\beta')}\int_{t_{p_{n}-1}}^{t_j^*}(t_j^*-s)^{-\beta'-1}(\mathbb{B}^{\pi}-\mathbb{B})(s,t_{p_n})ds\right|\\
		&\le \sum_{n=0}^{J}w_n\frac{\beta'}{\Gamma(1-\beta')}\int_{t_{p_{n}-1}}^{t_j^*}(t_j^*-s)^{-\beta'-1}\left|(\mathbb{B}^{\pi}-\mathbb{B})(s,t_{p_n})\right|ds\\
		&=\sum_{n=0}^{J}w_n\frac{\beta'}{\Gamma(1-\beta')}\int_{t_{p_n}}^{t_j^*}(t_j^*-s)^{-\beta'-1}\left|(\mathbb{B}^{\pi}-\mathbb{B})(s,t_{p})\right|ds\\
		&+\sum_{n=0}^{J}w_n\frac{\beta'}{\Gamma(1-\beta')}\int_{t_{p_{n}-1}}^{t_{p_n}}(t_j^*-s)^{-\beta'-1}\left|(\mathbb{B}^{\pi}-\mathbb{B})(s,t_{p_n})\right|ds\\
		&:=G_1+G_2.
	\end{align*}
The integral term $G_1$ can be bounded by an integral that is independent of n; hence we have
\begin{equation}
		\begin{aligned}
			\label{equation20}
		G_1&=\sum_{n=0}^{J}w_n\frac{\beta'}{\Gamma(1-\beta')}\int_{t_{p_n}}^{t_j^*}(t_j^*-s)^{-\beta'-1}\left|(\mathbb{B}^{\pi}-\mathbb{B})(s,t_{p})\right|ds\\
		&\le \sum_{n=0}^{J}w_n\frac{\beta'}{\Gamma(1-\beta')}\int_{t_{p}}^{t_j^*}(t_j^*-s)^{-\beta'-1}\left|(\mathbb{B}^{\pi}-\mathbb{B})(s,t_{p})\right|ds\\
		&\le SUM_J C \frac{\beta'\Gamma(4-\beta')}{\Gamma(1-\beta')^2} \tau_1^{3/r}\max_{p+1\le k\le j}\{t_k^{\beta'+\frac{\beta'-3}{r}}\}\int_{t_{p}}^{t_j^*}(t_j^*-s)^{-\beta'-1}s^{-1-\beta'/r}\min\{1,\frac{t_j^*-s}{\sigma\tau_j}\}ds\\
		&\le SUM_J C \frac{\beta'\Gamma(4-\beta')}{\Gamma(1-\beta')^2} \max_{p+1\le k\le j}\{t_k^{\beta'+\frac{\beta'-3}{r}}\}(t_j^*)^{-\beta'-1}\left(\frac{\tau_1^{3/r}t_p^{-\beta'/r}+\tau_1^{\frac{3-\beta'}{r}}}{\beta'}+\frac{\tau_1^{\frac{3-\beta'}{r}}}{1-\beta'}\right).
	\end{aligned}
\end{equation}
Here, $SUM_J=\sum_{n=0}^{J}w_n$.	The derivation of the third inequality is omitted here; please refer to the proof of inequalities \eqref{equation16} and \eqref{equation17} for details.
Moreover, it follows that:
\begin{equation}
		\begin{aligned}
			\label{equation18}
		\max_{p+1\le k\le j}\{t_k^{\beta'+\frac{\beta'-3}{r}}\}(t_j^*)^{-\beta'-1}\tau_1^{3/r}t_p^{-\beta'/r}&\le \sigma^{-1-\beta'} (t_j)^{-1}(\tau_1/t_p)^{\frac{3}{r}}\max_{p+1\le k\le j}\left[\left(\frac{t_k}{t_j}\right)^{\beta'}\left(\frac{t_p}{t_k}\right)^{\frac{3-\beta'}{r}}\right]\\&\le \sigma^{-1-\beta'} (t_j)^{-1}(\tau_1/t_p)^{\frac{3}{r}}(t_p/t_j)^{\min\{\beta',\frac{3-\beta'}{r}\}},
	\end{aligned}
\end{equation}
	and
		\begin{equation}
			\begin{aligned}
				\label{equation19}
				\max_{p+1\le k\le j}\{t_k^{\beta'+\frac{\beta'-3}{r}}\}(t_j^*)^{-\beta'-1}\tau_1^{\frac{3-\beta'}{r}}&\le \sigma^{-1-\beta'} (t_j)^{-1}\max_{p+1\le k\le j}\left[\left(\frac{t_k}{t_j}\right)^{\beta'}\left(\frac{\tau_1}{t_k}\right)^{\frac{3-\beta'}{r}}\right]\\&\le \sigma^{-1-\beta'} (t_j)^{-1}(\tau_1/t_j)^{\min\{\beta',\frac{3-\beta'}{r}\}}.
			\end{aligned}
		\end{equation}
	Substituting \eqref{equation18} and \eqref{equation19} into \eqref{equation20}, we obtain the estimate for $G_1$,
	\begin{align*}
		G_1&\le SUM_J C \frac{\beta'\Gamma(4-\beta')}{\Gamma(1-\beta')^2}\left[\frac{(t_j)^{-1}}{\beta'}(\tau_1/t_p)^{\frac{3}{r}}(t_p/t_j)^{\min\{\beta',\frac{3-\beta'}{r}\}}+\frac{(t_j)^{-1}}{\beta'(1-\beta')}(\tau_1/t_j)^{\min\{\beta',\frac{3-\beta'}{r}\}}\right]\\
		&\le SUM_J C(t_j)^{-1} \frac{\beta'\Gamma(4-\beta')}{\Gamma(1-\beta')^2}(\tau_1/t_p)^{\min\{\beta',\frac{3-\beta'}{r}\}}\bigg[\frac{1}{\beta'}(\tau_1/t_p)^{\frac{3}{r}-\min\{\beta',\frac{3-\beta'}{r}\}}(t_p/t_j)^{\min\{\beta',\frac{3-\beta'}{r}\}}\\
&\qquad +\frac{1}{\beta'(1-\beta')}(t_p/t_j)^{\min\{\beta',\frac{3-\beta'}{r}\}}\bigg]\\
		&\le SUM_J C(t_j)^{-1} \frac{\beta'\Gamma(4-\beta')}{\Gamma(1-\beta')^2}(\tau_1/t_p)^{\min\{\beta',\frac{3-\beta'}{r}\}}\left[\frac{1}{\beta'}(t_p/t_j)^{\min\{\beta',\frac{3-\beta'}{r}\}}+\frac{1}{\beta'(1-\beta')}(t_p/t_j)^{\min\{\beta',\frac{3-\beta'}{r}\}}\right]\\
		&\le SUM_J C(t_j)^{-1} \frac{(2-\beta')\Gamma(4-\beta')}{\Gamma(1-\beta')\Gamma(2-\beta')}(\tau_1/t_p)^{\min\{\beta',\frac{3-\beta'}{r}\}}(t_p/t_j)^{\min\{\beta',\frac{3-\beta'}{r}\}}.
	\end{align*}
	Next, we estimate $G_2$.  For $s\in (t_{p_n-1},t_{p_n})$,  one has $\left|(\Pi_{2,p_n}\mathbb{B}-\mathbb{B})(s,t_{p_n})\right|\le \left|\Pi_{2,p_n}(\mathbb{B}(s,t_{p_n})-\mathbb{B}(t_{p_n},t_{p_n}))\right|+\left|\mathbb{B}(s,t_{p_n})-\mathbb{B}(t_{p_n},t_{p_n})\right|\le C \max_{t_{p_n-1}\le s \le t_{p_n+1}}\left|\mathbb{B}(s,t_{p_n})-\mathbb{B}(t_{p_n},t_{p_n})\right|\le C\tau_{p_n}t_{p_n}^{\beta'-2}$, using the fact that
	\begin{align*}
		 \max_{t_{p_n-1}\le s\le t_{p_n}}\left|\Pi_{2,p_n}(\mathbb{B}(s,t_{p_n})-\mathbb{B}(t_{p_n},t_{p_n}))\right|\le C \max_{t_{p_n-1}\le s\le t_{p_n+1}}\left|\mathbb{B}(s,t_{p_n})-\mathbb{B}(t_{p_n},t_{p_n})\right|.
	\end{align*}
	Since $(\tau_{p_n}/t_{p_n})t_{p_n}^{\beta'-1}\le C (\tau_1/t_{p_n})^{1/r} t_{p_n}^{\beta'-1}$, it holds that
	\begin{equation}
		\begin{aligned}
			\label{equation21}
			G_2&=\sum_{n=0}^{J}w_n\frac{\beta'}{\Gamma(1-\beta')}\int_{t_{p_{n}-1}}^{t_{p_n}}(t_j^*-s)^{-\beta'-1}\left|(\mathbb{B}^{\pi}-\mathbb{B})(s,t_{p_n})\right|ds\\
			&\le C \sum_{n=0}^{J}w_n\frac{\beta'}{\Gamma(1-\beta')}\int_{t_{p_{n}-1}}^{t_{p_n}}(t_j^*-s)^{-\beta'-1}(\tau_1/t_{p_n})^{1/r} t_{p_n}^{\beta'-1}ds\\
			&\le C \sum_{n=0}^{J}w_n\frac{\beta'}{\Gamma(1-\beta')}\tau_1^{1/r}\max_{p\le k\le j}\{(t_k)^{\frac{\beta'-1}{r}+\beta'}\}\int_{t_{p_{n}-1}}^{t_{p_n}}(t_j^*-s)^{-\beta'-1}s^{-1-\frac{\beta'}{r}}ds.
		\end{aligned}
	\end{equation}
Due to $p_J<j$, we have $t_{p-1}\le t_{p_n-1}\le t_{p_n}\le t_{j-1}$; consequently, the integral of $G_2$ can be enlarged to
\begin{equation}
		\begin{aligned}
			\label{equation22}
		\int_{t_{p_{n}-1}}^{t_{p_n}}(t_j^*-s)^{-\beta'-1}s^{-1-\frac{\beta'}{r}}ds&\le \int_{t_{p-1}}^{t_{j-1}}(t_j^*-s)^{-\beta'-1}s^{-1-\frac{\beta'}{r}}ds\\
		&=(t_j^*)^{-\beta'-1-\frac{\beta'}{r}}\int_{t_{p-1}/t_j^*}^{t_{j-1}/t_j^*}(1-s)^{-\beta'-1}s^{-1-\frac{\beta'}{r}}ds\\
		&\le C(t_j^*)^{-\beta'-1-\frac{\beta'}{r}}\frac{1}{\beta'}\left[(t_{p-1}/t_j^*)^{-\frac{\beta'}{r}}+(\sigma\tau_j/t_j^*)^{-\beta'}\right]\\
		&\le C(t_j)^{-\beta'-1-\frac{\beta'}{r}}\frac{1}{\beta'}\left[(t_{p}/t_j)^{-\frac{\beta'}{r}}+(\tau_j/t_j)^{-\beta'}\right].
	\end{aligned}
\end{equation}
Inserting the result of \eqref{equation22} into \eqref{equation21}, we obtain
\begin{align*}
	G_2 &\le C SUM_J\frac{1}{\Gamma(1-\beta')}\tau_1^{1/r}\max_{p\le k\le j}\{(t_k)^{\frac{\beta'-1}{r}+\beta'}\}(t_j)^{-\beta'-1}\left[(t_{p})^{-\frac{\beta'}{r}}+(\tau_1)^{-\beta'/r}\right]\\
	&=C SUM_J\frac{1}{\Gamma(1-\beta')}(t_j)^{-1}\max_{p\le k\le j}\left\{\left(\frac{t_k}{t_j}\right)^{\beta'}(t_k)^{\frac{\beta'-1}{r}}\right\}\left[\tau_1^{1/r}(t_{p})^{-\frac{\beta'}{r}}+(\tau_1)^{\frac{1-\beta'}{r}}\right]\\
	&\le C SUM_J\frac{1}{\Gamma(1-\beta')}(t_j)^{-1}(\tau_1/t_p)^{\min\{\beta',\frac{1-\beta'}{r}\}}\left[(\tau_1/t_p)^{1/r-\min\{\beta',\frac{1-\beta'}{r}\}}(t_p/t_j)^{\min\{\beta',\frac{1-\beta'}{r}\}}+(t_p/t_j)^{\min\{\beta',\frac{1-\beta'}{r}\}}\right]\\
	&\le C SUM_J\frac{1}{\Gamma(1-\beta')}(t_j)^{-1}(\tau_1/t_p)^{\min\{\beta',\frac{1-\beta'}{r}\}}(t_p/t_j)^{\min\{\beta',\frac{1-\beta'}{r}\}}.
\end{align*}
In reference \cite{2025arXiv250612954K}, it has been established that for any $K\ge 0$,
\begin{equation}
	\begin{aligned}
		SUM_K\le C_{\gamma,r}\begin{cases}
			1,\ &if\ \gamma>0,\\
			{1+\ln(t_{p_K}/t_p)},\ &if\ \gamma=0,\\
			(t_p/t_{p_K})^{\gamma},\ &if\ \gamma<0.
		\end{cases}
	\end{aligned}
\end{equation}
 For $SUM_J$, using $t_{p_J}/t_p\le t_{j}/t_p$ and $(1+\ln(x))x^{-\hat\gamma}\le (\hat\gamma)^{-1}$ for $x\ge 1$, one has
 \begin{align*}
 	SUM_J(t_p/t_j)^{\hat\gamma}\le \begin{cases}
 		C_{\gamma,r}\frac{\beta'(r-1)+1}{\beta'(1-\beta')}(t_p/t_j)^\gamma\ if\ \gamma=0\\
 		C_{\gamma,r}(t_p/t_j)^\gamma\ if\ \gamma\neq0.
 	\end{cases}
 \end{align*}
Combining the estimates for $G_1$ and $G_2$ yields
\begin{equation}
	\begin{aligned}
		\left|\delta_{t}^{\beta',*}U_K^j-D_t^{\beta'}U_K(t_j^*)\right|&\le  C SUM_J\frac{1}{\Gamma(1-\beta')}(t_j)^{-1}(\tau_1/t_p)^{\min\{\beta',\frac{1-\beta'}{r}\}}(t_p/t_j)^{\min\{\beta',\frac{1-\beta'}{r}\}}\\
		&\le C \frac{1}{\beta'\Gamma(2-\beta')}(t_j)^{-1}(\tau_1/t_p)^{\hat\gamma}(t_p/t_j)^{\gamma},
	\end{aligned}
\end{equation}
for $j>p$. We can choose a sufficiently large $p$ such that the following holds:
$	\left|\delta_{t}^{\beta',*}U_K^j-D_t^{\beta'}U_K(t_j^*)\right|\le \frac{1}{2}D_t^{\beta'}U_K(t_j^*)$  for $p<j\le p_{K+1}$.
 Therefore, when $\tau$ is sufficiently small, there exists a barrier function satisfying the following conditions:
 \begin{align}
 	\label{equation23}
 	\delta_{t}^{\beta',*}U_K^j\ge C (t_j)^{-1}(\tau_1/t_j)^{\gamma},\ and\ U_K^j\le SUM_K(t_j)^{\beta'-1},\ for\ 1\le j\le p_{K+1}.
 \end{align}
It is worth noting that the constant $C$ in inequality \ref{equation23} is independent of $K$; $C$ depends on $\beta'$, $p$, $r$ and $\gamma$.

According to the comparison principle (Lemma \ref{lemma2}), we thus obtain the estimate for $v^j$. When $0<j\le p_1$, it holds that there exists a positive constant $d_1$ such that  $\delta_t^{\beta',*}v^k\le d_1\tau_1\delta_{t}^{\beta',*}U_0^k$ for $1\le k\le j$, thus $v^j\le d_1 SUM_0\tau_1 (t_j)^{\beta'-1}\le  d_1 C_{\gamma,r}\tau_1 (t_j)^{\beta'-1}\le d_1 C_{\gamma,r}V(j,\gamma)$. When $p_1< j\le p_2$, we have $\delta_t^{\beta',*}v^k\le d_1\tau_1\delta_{t}^{\beta',*}U_1^k$ for $1\le k\le j$, and
\begin{align*}
	v^j\le d_1 SUM_1\tau_1 (t_j)^{\beta'-1}&\le  d_1 C_{\gamma,r}\tau_1 (t_j)^{\beta'-1}\begin{cases}
		1,\ &if\ \gamma>0,\\
		1+\ln(t_{p_1}/t_p),\ &if\ \gamma=0,\\
		(t_p/t_{p_1})^{\gamma},\ &if\ \gamma<0,
	\end{cases}\\
	&\le d_1 C_{\gamma,r}\tau_1 (t_j)^{\beta'-1}\begin{cases}
	1,\ &if\ \gamma>0,\\
	1+\ln(t_{j}/\tau_1),\ &if\ \gamma=0,\\
	(\tau_1/t_{j})^{\gamma},\ &if\ \gamma<0,
	\end{cases}\\
	&=d_1 C_{\gamma,r}V(j,\gamma).
\end{align*}
Proceeding in the same manner, yields $v^j\le d_1C_{\gamma,r}V(j,\gamma)$ for $j\ge 1$.

The above is the proof under the assumption that  $\gamma\le\hat\gamma$. If $\gamma>\hat\gamma$, for $j\ge 1$, we will have
\begin{align*}
	\delta_t^{\beta',*}v^j\lesssim (\tau_1/t_j)^{\gamma+1}\le (\tau_1/t_j)^{\hat\gamma+1}\ and\ 	v^j\lesssim V(j,\hat\gamma)=\tau_1t_j^{\beta'-1}=V(j,\gamma).
\end{align*}
Therefore, the proof of the lemma is complete.
		\end{proof}
\end{lemma}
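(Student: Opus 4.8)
The plan is to transfer the bound on the discrete fractional derivative into a bound on $v^j$ itself by constructing a discrete \emph{barrier} (supersolution) and invoking the comparison principle of Lemma \ref{lemma2}. Before building the barrier I would reduce to a critical exponent $\hat\gamma:=\min\{\beta',(1-\beta')/r\}$, which is strictly positive. Indeed, if $\gamma>\hat\gamma$ then the hypothesis $\delta_t^{\beta',*}v^j\lesssim(\tau_1/t_j)^{\gamma+1}$ is stronger than the one at $\hat\gamma$, so the conclusion for $\hat\gamma$ already yields $v^j\lesssim V(j,\hat\gamma)=\tau_1 t_j^{\beta'-1}$; since both $\gamma$ and $\hat\gamma$ are positive the factor $(\tau_1/t_j)^{\min\{0,\cdot\}}$ is trivial and $\mathcal{M}_\gamma=1$, so this coincides with $V(j,\gamma)$. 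Hence it suffices to treat the range $\gamma\le\hat\gamma$.

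For $\gamma\le\hat\gamma$ I would take as building block $\mathbb{B}(s;t_{p_k})=\min\{(s/t_{p_k})t_{p_k}^{\beta'-1},\,s^{\beta'-1}\}$, which is linear in $s$ below its centre $t_{p_k}$ and matches the anticipated $s^{\beta'-1}$ growth of $v$ above it. Using dyadic centres $p_k=2^kp$ and weights $w_k=(t_p/t_{p_k})^\gamma$ that encode the desired $(\tau_1/t_j)^\gamma$ decay, I set $U_K(s)=\sum_{k=0}^{K}w_k\,\mathbb{B}(s;t_{p_k})$. First I would bound the \emph{continuous} fractional derivative of the barrier from below: feeding the quoted pointwise lower bounds for $D_t^{\beta'}\mathbb{B}$ into the two regimes $t_{p_k}<t_j^*\le t_{p_{k+1}}$ and $0<t_j^*\le t_p$, and using the mesh estimate $t_{p_k}/t_{p_{k+1}}\ge C_*2^{-r}$ to absorb the dyadic scaling into constants, yields $D_t^{\beta'}U_K(t_j^*)\ge C(t_p/t_j)^\gamma t_j^{-1}$.

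The crux is the step that controls the difference between the discrete and continuous operators, $\bigl|\delta_t^{\beta',*}U_K^j-D_t^{\beta'}U_K(t_j^*)\bigr|$, and forces it below $\tfrac12 D_t^{\beta'}U_K(t_j^*)$ once $p$ is large. Because $\mathbb{B}$ is linear below its centre, the two operators coincide for $j\le p$; for $j>p$ the error is the quadratic-interpolation remainder of $\mathbb{B}$, which I would bound through $|\mathbb{B}'''|$ exactly as in \eqref{equation14}--\eqref{equation15}, splitting it into a ``far'' part $G_1$ (integration past the centres, whose integrand I reduce to one independent of $n$) and a ``near'' part $G_2$ (the mesh block containing each centre). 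Summing over the dyadic blocks produces the geometric sum $SUM_J=\sum_n w_n$; the delicate bookkeeping is that $SUM_J$ multiplied by the interpolation-error scaling $(\tau_1/t_p)^{\hat\gamma}$ stays comparable to $D_t^{\beta'}U_K(t_j^*)$, so this factor can be driven below $\tfrac12$ by enlarging $p$ (equivalently by shrinking $\tau$). Controlling $SUM_J$ uses the quoted geometric-sum estimate together with the elementary inequality $(1+\ln x)x^{-\hat\gamma}\le\hat\gamma^{-1}$ for $x\ge1$. This interplay is exactly what dictates the critical exponent $\hat\gamma$ and the choice of $p$, and I expect it to be the main obstacle.

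Having shown $\delta_t^{\beta',*}U_K^j\ge C\,t_j^{-1}(\tau_1/t_j)^\gamma$ and $U_K^j\le SUM_K\,t_j^{\beta'-1}$ with a constant independent of $K$, I would finish by comparison. Proceeding window by window ($0<j\le p_1$, then $p_1<j\le p_2$, and so on), the rescaled barrier $d_1\tau_1 U_K$ dominates the discrete fractional derivative of $v$ on the $K$-th window, so Lemma \ref{lemma2} gives $v^j\le d_1\,SUM_K\,\tau_1 t_j^{\beta'-1}\le d_1 C_{\gamma,r}V(j,\gamma)$. The logarithmic factor $\mathcal{M}_0=1+\ln(t_j/\tau_1)$ surfaces precisely in the borderline case $\gamma=0$, where $SUM_K\simeq 1+\ln(t_{p_K}/t_p)$, whereas for $\gamma\neq0$ the sum is uniformly bounded and $\mathcal{M}_\gamma=1$. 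Together with the reduction of the first paragraph, this covers all cases and completes the proof.
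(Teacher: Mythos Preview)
Your proposal is correct and follows essentially the same approach as the paper: the same reduction to $\gamma\le\hat\gamma$, the same dyadic barrier $U_K(s)=\sum_k w_k\mathbb{B}(s;t_{p_k})$, the same lower bound on $D_t^{\beta'}U_K$ via the two regimes, the same $G_1/G_2$ split controlled by $SUM_J\cdot(\tau_1/t_p)^{\hat\gamma}$, and the same window-by-window application of the comparison principle. One small clarification worth making explicit: the key property \eqref{equation23} holds on the \emph{entire} range $1\le j\le p_{K+1}$ (not merely on the $K$-th window), which is what legitimises the use of Lemma~\ref{lemma2} up to index $j$; also, the smallness driving $|\delta_t^{\beta',*}U_K^j-D_t^{\beta'}U_K(t_j^*)|\le\tfrac12 D_t^{\beta'}U_K(t_j^*)$ comes from $(\tau_1/t_p)^{\hat\gamma}\simeq p^{-r\hat\gamma}$ and is governed by the choice of $p$, with the hypothesis on $\tau$ ensuring the mesh is fine enough for this $p$ to be admissible.
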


\begin{lemma}
	\label{lemma6}
	Let $\sigma=1-\frac{\beta'}{2}$. Assume that $\tau$ is sufficiently small and property P1 holds for the coefficients of $\delta_t^{\beta',*}v^n$ with $0<\beta'<1$. $\lambda_1$ and $\lambda_2$ are non-negative constants and at least one of them is positive.  For $\gamma\in \mathbb{R}$ and $r\ge 1$, it holds that
	\begin{align}
		\begin{cases}
			(\delta_t^{\beta',*}-\lambda_1)v^j-\lambda_2 v^{j-1}\lesssim (\tau_1/t_j)^{\gamma+1}\\
			\forall\ j\ge 1, v^0=0
		\end{cases}\Rightarrow  v^j\lesssim V(j,\gamma):= \mathcal{M}_{\gamma}(t_j)\tau_1 t_j^{\beta'-1}(\tau_1/t_j)^{\min\{0,\gamma\}},
	\end{align}
	where $\mathcal{M}_\gamma(t_j)=1+\ln(t_j/\tau_1)$ for $\gamma=0$ and $\mathcal{M}_\gamma(t_j)=1$ for $\gamma\neq0$.
\begin{proof}
From Lemma \ref{lemma3}, there exist $\delta_t^{\beta',*}\xi_\gamma^j=(\tau_1/t_j)^{\gamma+1}$ and $\delta_t^{\beta',*}\xi_{\gamma^*}^j=(\tau_1/t_j)^{\gamma^*+1}$ for $\gamma\in\mathbb{R}$ and $j\ge 1$ such that the following inequalities hold for $j\ge 1$:
\begin{equation}
	\begin{aligned}
		\label{equation25}
		\xi_\gamma^j\le C_1\mathcal{M}_\gamma(t_j)\tau_1^{\beta'}(\tau_1/t_j)^{1+\gamma^*},\
		\xi_{\gamma^*}^j\le C_2\tau_1^{\beta'}(\tau_1/t_j)^{1+\gamma^*-\beta'},\\
		(\delta_t^{\beta',*}-\lambda_1)	\xi_\gamma^j-\lambda_2 	\xi_\gamma^{j-1}\ge (\tau_1/t_j)^{\gamma+1}- C_\gamma \mathcal{M}_\gamma(T)\tau_1^{\beta'}(\tau_1/t_j)^{1+\gamma^*},\\
		(\delta_t^{\beta',*}-\lambda_1)	\xi_{\gamma^*}^j-\lambda_2 	\xi_{\gamma^*}^{j-1}\ge (\tau_1/t_j)^{\gamma^*+1}(1-C_\gamma t_j^{\beta'}),
	\end{aligned}
\end{equation}
where $\gamma^*=\min\{0,\gamma\}-\beta'<0$. We construct the following barrier function
\begin{align*}
	S^j=\xi_\gamma^j+\mathcal{M}_\gamma(T)\bar b\tau_1^{\beta'}\xi_{\gamma^*}^j+\mathcal{M}_\gamma(T)(\bar b)^2\tau_1^{1+\min\{0,\gamma\}}B^j,
\end{align*}
where $\bar b\ge C_\gamma\max\{2, c_1^{-1-\gamma^*+\beta'},T^{-1-\gamma^*+\beta'}\}$ is a sufficiently large constant, and $c_1\le (2C_\gamma)^{-\frac{1}{\beta'}}$.  The subsequent derivation requires only minor modifications to the proof of Theorem 4.1 in \cite{submittied}. Hence,
\begin{align*}
	&(\delta_t^{\beta',*}-\lambda_1)	(\xi_\gamma^j+\mathcal{M}_\gamma(T)\bar b\tau_1^{\beta'}\xi_{\gamma^*}^j)-\lambda_2 (	\xi_\gamma^{j-1}+\mathcal{M}_\gamma(T)\bar b\tau_1^{\beta'}\xi_{\gamma^*}^{j-1}) \\
&\ge (\tau_1/t_j)^{\gamma+1}-\mathcal{M}_\gamma(T)\left[C_\gamma \tau_1^{\beta'}(\tau_1/t_j)^{1+\gamma^*}-\bar b\tau_1^{\beta'}(\tau_1/t_j)^{\gamma^*+1}(1-C_\gamma t_j^{\beta'})\right]\\
	&\ge (\tau_1/t_j)^{\gamma+1}-\mathcal{M}_\gamma(T)(\bar b)^2\begin{cases}
		0,\ &t_j<c_1,\\
		\tau_1^{1+\min\{\gamma,0\}},\ &t_j\ge c_1.
	\end{cases}
\end{align*}
Set $c_0=\min\{\frac{c_1}{3},\frac{(2(\lambda_1+\lambda_2)\Gamma(2-\beta'))^{-1/\beta'}}{3}\}$, and $\tau\le c_0/2$. We choose $t_m$ such that $\frac{1}{2}c_0\le t_m\le c_0$ if $T\ge \frac{1}{2}c_0$, $t_m=T$ otherwise. This ensures that $t_m+\frac{5}{4}c_0\le c_1$ and $t_j^*\ge t_m+c_0$ when $t_j\ge c_1$.  Lemma 4.2 in \cite{submittied} implies that there exists a function $B^j$ satisfies that $B^j=0$ if $0\le j\le m$,  $0\le B^j\le C_3$ if $j>m$, and
\begin{align*}
	(\delta_t^{\beta',*}-\lambda_1)B^j-\lambda_2B^{j-1}\ge \begin{cases}
		0,\ &t_j^*<t_m+c_0,\\
		1,\ &t_j^*\ge t_m+c_0,
	\end{cases}
	\ for\ j\ge 1.
\end{align*} Thus, we obtain
\begin{align*}
	(\delta_t^{\beta',*}-\lambda_1)S^j-\lambda_2S^{j-1}\ge (\tau_1/t_j)^{\gamma+1}.
\end{align*}
For $j> m$, it holds that
\begin{equation}
	\begin{aligned}
		\label{equation24}
		\bar b\tau_1^{\beta'}\xi_{\gamma^*}^j+(\bar b)^2\tau_1^{1+\min\{0,\gamma\}}B^j&\le C_2\bar b \tau_1^{\beta'}t_j^{\beta'}(\tau_1/t_j)^{1+\gamma^*}+C_3(\bar b)^2\tau_1^{\beta'}t_j^{\beta'}(\tau_1/t_j)^{1+\gamma^*} t_j^{1+\gamma^*-\beta'}\\
		&\le \max_{j>m}( C_2\bar b+C_3(\bar b)^2 t_j^{1+\gamma^*-\beta'})\tau_1^{\beta'}t_j^{\beta'}(\tau_1/t_j)^{1+\gamma^*}\\
		&\le  \left(C_2\bar b +C_3(\bar b)^2 \max\{T^{1+\gamma^*-\beta'},(c_0/2)^{1+\gamma^*-\beta'}\}\right)\tau_1^{\beta'}t_j^{\beta'}(\tau_1/t_j)^{1+\gamma^*}:=C_4\tau_1^{\beta'}t_j^{\beta'}(\tau_1/t_j)^{1+\gamma^*}.
	\end{aligned}
\end{equation}
For $j\le m$, $B^j=0$ yields that $\bar b\tau_1^{\beta'}\xi_{\gamma^*}^j+(\bar b)^2\tau_1^{1+\min\{0,\gamma\}}B^j\le C_4\tau_1^{\beta'}t_j^{\beta'}(\tau_1/t_j)^{1+\gamma^*}$ holds as well.
Note that the above observations \eqref{equation24} and \eqref{equation25} yields, for $j\ge 1$ and $\gamma=0$,
\begin{align*}
	S^j&\le (C_1\mathcal{M}_\gamma(t_j)+ C_4\mathcal{M}_\gamma(T)t_j^{\beta'})\tau_1^{\beta'}(\tau_1/t_j)^{1+\gamma^*}\\
	&= \left[C_1\mathcal{M}_\gamma(t_j)+C_4(1+\ln(t_j/\tau_1)+\ln(T/t_j))t_j^{\beta'}\right]\tau_1^{\beta'}(\tau_1/t_j)^{1+\gamma^*}\\
	&\le \left[C_1\mathcal{M}_\gamma(t_j)+C_4T^{\beta'}(1+\ln(t_j/\tau_1))+C_4\ln(T/t_j)t_j^{\beta'}\right]\tau_1^{\beta'}(\tau_1/t_j)^{1+\gamma^*}\\
	&\le \left(C_1+\frac{1+\beta'}{\beta'}C_4T^{\beta'}\right)\mathcal{M}_\gamma(t_j)\tau_1^{\beta'}(\tau_1/t_j)^{1+\gamma^*}.
\end{align*}
The last inequality follows from the fact that  $\ln(x)\le \frac{1}{\beta'}x^{\beta'}$ for $x\ge 1$.  If $\gamma\neq 0$, it is obvious that $S^j\lesssim\mathcal{M}_\gamma(t_j)\tau_1^{\beta'}(\tau_1/t_j)^{1+\gamma^*}$.
Applying the comparison principle, the proof of this lemma can be concluded.
\end{proof}
\end{lemma}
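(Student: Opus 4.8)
The plan is to reduce everything to the two tools already proved: the comparison principle (Lemma \ref{lemma2}) and the reaction-free estimate (Lemma \ref{lemma3}). By Lemma \ref{lemma2} applied with $v_1=v^j$ and $v_2=S^j$, the grid function $v^j$ is dominated by any nonnegative supersolution $S^j$, so it suffices to build a \emph{barrier} $S^j\ge 0=v^0$ satisfying
\[
(\delta_t^{\beta',*}-\lambda_1)S^j-\lambda_2 S^{j-1}\ge C(\tau_1/t_j)^{\gamma+1}\quad\text{and}\quad S^j\lesssim V(j,\gamma),
\]
where $C$ absorbs the implicit constant in the hypothesis. Then $v^j\le S^j\lesssim V(j,\gamma)$ and the lemma follows.

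First I would use Lemma \ref{lemma3} to manufacture elementary building blocks: for a prescribed exponent $\delta$, a grid function $\xi_\delta^j\ge 0$ with $\delta_t^{\beta',*}\xi_\delta^j=(\tau_1/t_j)^{\delta+1}$ and $\xi_\delta^j\lesssim V(j,\delta)$. The naive choice $S^j=\xi_\gamma^j$ already produces the right leading term, but the reaction part spoils the sign, since $(\delta_t^{\beta',*}-\lambda_1)\xi_\gamma^j-\lambda_2\xi_\gamma^{j-1}=(\tau_1/t_j)^{\gamma+1}-\lambda_1\xi_\gamma^j-\lambda_2\xi_\gamma^{j-1}$ and the subtracted mass is of size $V(j,\gamma)$, which is not negligible against the target once $t_j$ is bounded below. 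The remedy is a lower-order correction $\xi_{\gamma^*}^j$ with $\gamma^*:=\min\{0,\gamma\}-\beta'<0$, scaled by $\mathcal{M}_\gamma(T)\bar b\,\tau_1^{\beta'}$ for a large constant $\bar b$: because $\gamma^*<\gamma$, its production $(\tau_1/t_j)^{\gamma^*+1}$ overwhelms the lost mass when $t_j\gtrsim c_1$, returning a positive surplus of the required order, while contributing only $\lesssim\tau_1^{\beta'}t_j^{\beta'}(\tau_1/t_j)^{1+\gamma^*}\lesssim T^{\beta'}V(j,\gamma)$ to the size (the spare $t_j^{\beta'}\le T^{\beta'}$ being harmless).

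A residual gap survives exactly where the factor $1-C_\gamma t_j^{\beta'}$ coming from this comparison has turned unfavorable, i.e.\ for $t_j\ge c_1$. There I would patch with a third barrier $B^j$, borrowed from Lemma 4.2 of \cite{submittied}, that vanishes for $t_j\le t_m$, stays uniformly bounded, and satisfies $(\delta_t^{\beta',*}-\lambda_1)B^j-\lambda_2 B^{j-1}\ge 1$ once $t_j^*\ge t_m+c_0$. The thresholds $c_0,c_1$ and the offset point $t_m$ must be chosen so that the small-$t_j$ regime handled by $\xi_{\gamma^*}$ and the large-$t_j$ regime handled by $B^j$ overlap, e.g.\ $c_1\le(2C_\gamma)^{-1/\beta'}$, $t_m+\tfrac54 c_0\le c_1$, and $t_j^*\ge t_m+c_0$ whenever $t_j\ge c_1$. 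With these choices the assembled barrier
\[
S^j=\xi_\gamma^j+\mathcal{M}_\gamma(T)\bar b\,\tau_1^{\beta'}\xi_{\gamma^*}^j+\mathcal{M}_\gamma(T)(\bar b)^2\tau_1^{1+\min\{0,\gamma\}}B^j
\]
satisfies the sign inequality on the whole mesh.

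Finally I would confirm $S^j\lesssim V(j,\gamma)$ by summing the three size bounds: the $\xi_{\gamma^*}$ and $B^j$ pieces are each controlled by $C\tau_1^{\beta'}t_j^{\beta'}(\tau_1/t_j)^{1+\gamma^*}$, and in the logarithmic case $\gamma=0$ the extra $\ln(T/t_j)\,t_j^{\beta'}$ is absorbed through $\ln x\le \beta'^{-1}x^{\beta'}$, returning exactly $\mathcal{M}_\gamma(t_j)\tau_1^{\beta'}(\tau_1/t_j)^{1+\gamma^*}\simeq V(j,\gamma)$. The main obstacle I anticipate is the bookkeeping in the sign inequality: forcing the single large parameter $\bar b$ together with the cutoffs $c_0,c_1$ to dominate the reaction loss simultaneously in the small- and large-$t_j$ regimes, without inflating the magnitude bound beyond $V(j,\gamma)$. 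Once that balance is secured, Lemma \ref{lemma2} closes the argument.
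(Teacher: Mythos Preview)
Your proposal is correct and follows essentially the same route as the paper's own proof: you build the identical three-term barrier $S^j=\xi_\gamma^j+\mathcal{M}_\gamma(T)\bar b\,\tau_1^{\beta'}\xi_{\gamma^*}^j+\mathcal{M}_\gamma(T)(\bar b)^2\tau_1^{1+\min\{0,\gamma\}}B^j$ with $\gamma^*=\min\{0,\gamma\}-\beta'$, invoke Lemma~4.2 of \cite{submittied} for $B^j$, choose the same thresholds $c_0,c_1,t_m$, and close with the comparison principle after absorbing the $\ln(T/t_j)t_j^{\beta'}$ term via $\ln x\le\beta'^{-1}x^{\beta'}$. The paper carries out exactly this program with the same bookkeeping you anticipate.
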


\section{Convergence and stability analysis}\label{sec:con}
In this section, we present a key inequality that will be used for the convergence analysis. To this end, we first introduce some notation for arbitrary grid functions $U,V$ $\in \Pi_h=\{v|v\ is\ \ a\ grid\ function\ defined\ in\ \bar\Omega_h,\ and\ v=0\ on\  \partial\Omega_h\}$.

We define
\begin{gather*}
	(U,V)=h^2\sum_{i=1}^{M-1}\sum_{j=1}^{M-1}{U_{i,j}V_{i,j}},\ \|U\|=\sqrt{(U,U)},\\
	\delta_xU_{i-\frac{1}{2},j}=\frac{U_{i,j}-U_{i-1,j}}{h},\ \delta_yU_{i,j-\frac{1}{2}}=\frac{U_{i,j}-U_{i,j-1}}{h},\\
	(U,V)_x=h^2\sum_{i=1}^{M}\sum_{j=1}^{M-1}{\delta_xU_{i-\frac{1}{2},j}\delta_xV_{i-\frac{1}{2},j}},\\
	(U,V)_y=h^2\sum_{i=1}^{M-1}\sum_{j=1}^{M}{\delta_yU_{i,j-\frac{1}{2}}\delta_yV_{i,j-\frac{1}{2}}},\\
	(\nabla_hU,\nabla_hV)=(U,V)_x+(U,V)_y.
\end{gather*}
One can easily verify that $-(U,\Delta_hV)=(\nabla_hU,\nabla_hV)$ for  $U,V\in \Pi_h$.
\begin{lemma}(Lemma 2.1, \cite{MR2588905})
	\label{lemma4}
	For grid function $U\in\Pi_h$, one has
	\begin{align}
		\|U\|\le C\|\nabla_h U\|.
	\end{align}
\end{lemma}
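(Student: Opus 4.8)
The estimate is nothing but the discrete Poincaré--Friedrichs inequality on the uniform grid $\bar\Omega_h$, so the strategy is to convert the homogeneous boundary data built into $\Pi_h$ into pointwise control of $U$ by its first differences. The plan is to represent each interior value as a telescoping sum in one coordinate direction and then apply the discrete Cauchy--Schwarz inequality; summing the resulting pointwise bounds over all interior nodes will produce $\|U\|^2\lesssim (U,U)_x$, which is already dominated by $\|\nabla_h U\|^2$.

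Concretely, first I would fix an interior index pair $(i,j)$ with $1\le i,j\le M-1$ and use the boundary value $U_{0,j}=0$ to write
\begin{align*}
	U_{i,j}=\sum_{k=1}^{i}(U_{k,j}-U_{k-1,j})=h\sum_{k=1}^{i}\delta_x U_{k-\frac12,j}.
\end{align*}
The Cauchy--Schwarz inequality then gives $U_{i,j}^2\le i\,h^2\sum_{k=1}^{i}(\delta_x U_{k-\frac12,j})^2\le M\,h^2\sum_{k=1}^{M}(\delta_x U_{k-\frac12,j})^2$, where the last bound drops the dependence on $i$ by extending the inner sum to the full range $1\le k\le M$.

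Summing this estimate over $1\le i\le M-1$ (which contributes a factor $M-1\le M$) and over $1\le j\le M-1$, and multiplying by $h^2$, I expect to obtain
\begin{align*}
	\|U\|^2=h^2\sum_{i=1}^{M-1}\sum_{j=1}^{M-1}U_{i,j}^2\le (M-1)M\,h^2\,(U,U)_x\le (Mh)^2(U,U)_x.
\end{align*}
Since $Mh=L$ and $(U,U)_x\le (U,U)_x+(U,U)_y=\|\nabla_h U\|^2$, taking square roots yields $\|U\|\le L\|\nabla_h U\|$, which is the claim with $C=L$. The argument is essentially bookkeeping; the only point demanding care is keeping the index ranges of $\delta_x U_{k-\frac12,j}$ consistent with the definition of $(U,U)_x$ (the outer index runs $1\le k\le M$, the transverse one $1\le j\le M-1$), so that the summed first differences assemble exactly into $(U,U)_x$ rather than an expression that over- or under-counts boundary contributions. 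Since $\Pi_h$ is symmetric in the two coordinate directions, one could equally telescope in $y$ and bound $\|U\|^2$ by $(U,U)_y$ instead.
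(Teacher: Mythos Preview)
Your argument is correct and is the standard proof of the discrete Poincar\'e--Friedrichs inequality: telescoping from the zero boundary value, Cauchy--Schwarz, and summing to assemble $(U,U)_x$ all go through exactly as you wrote, with the final constant $C=L$. The paper itself does not supply a proof of this lemma; it is quoted from \cite{MR2588905}, so there is no in-paper argument to compare against, but your proof is precisely the classical one that reference records.
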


\begin{lemma}
	\label{keylemma}
	Assume that the coefficients of $\delta_{t}^{\beta',*}v^n$ satisfy properties P1 and P2 with $0<\beta'<1$, and $1\ge\sigma\ge \frac{1}{2}$. For $n=1,...,N$, one has \begin{align}
		(\delta_{t}^{\beta',*}v_1^n,v_1^{n,*})+(\delta_{t}^{\beta',*}v_2^n,v_2^{n,*})\ge \left[\sigma \xi^n+(1-\sigma)\xi^{n-1}\right]\delta_t^{\beta',*}\xi^n.
	\end{align}
	where  $\{v_1^n\}_{n=0}^N$and $\{v_2^n\}_{n=0}^N\in \Pi_h$,  $\xi^n=\sqrt{\|v_1^n\|^2+\|v_2^n\|^2}$.
	\begin{proof}
		We introduce a novel discrete representation:  $\delta_t^{\beta,*}v^n=\sum_{k=0}^{n}p_{n,k}v^k$, where the coefficients are defined by  $p_{n,k}=g_{n,k}-g_{n,k+1}$. Here, we set $g_{n,0}=g_{n,n+1}=0$.
		If property P1 holds, then $p_{n,n}>0$, whereas $p_{n,k}< 0$ for all $0\le k\le n-1$.
		Let $\delta_{t}^{\beta',*}v_1^n$ take the inner product with $v_1^n$ and $v_1^{n-1}$ respectively, yielding:
		\begin{align*}
			(\delta_{t}^{\beta',*}v_1^n,v_1^n)=p_{n,n}(v_1^n,v_1^n)+p_{n,n-1}(v_1^{n-1},v_1^n)+\sum_{j=0}^{n-2}p_{n,j}(v_1^j,v_1^n),\\			(\delta_{t}^{\beta',*}v_1^n,v_1^{n-1})=p_{n,n}(v_1^n,v_1^{n-1})+p_{n,n-1}(v_1^{n-1},v_1^{n-1})+\sum_{j=0}^{n-2}p_{n,j}(v_1^j,v_1^{n-1}).
		\end{align*}
		It is easy to get
		\begin{align*}
			\sigma	(\delta_{t}^{\beta',*}v_1^n,v_1^n)+(1-\sigma)(\delta_{t}^{\beta',*}v_1^n,v_1^{n-1})&=\left[\sigma p_{n,n-1}+(1-\sigma)p_{n,n}\right](v_1^n,v_1^{n-1})\\
			&+\sigma\left[p_{n,n}(v_1^n,v_1^n)+\sum_{j=0}^{n-2}p_{n,j}(v_1^j,v_1^n)\right]+(1-\sigma)\sum_{j=0}^{n-1}p_{n,j}(v_1^j,v_1^{n-1}).
		\end{align*}
		In the same manner, $(\delta_{t}^{\beta',*}v_2^n,v_2^{n,*})$ will yield similar results. Furthermore, by adding the two equations, we obtain
		\begin{align*}
			(\delta_{t}^{\beta',*}v_1^n,v_1^{n,*})+(\delta_{t}^{\beta',*}v_2^n,v_2^{n,*})&=\left[\sigma p_{n,n-1}+(1-\sigma)p_{n,n}\right]\left[(v_1^n,v_1^{n-1})+(v_2^n,v_2^{n-1})\right]\\
			&+\sigma\left\{p_{n,n}\left[(v_1^n,v_1^n)+(v_2^n,v_2^n)\right]+\sum_{j=0}^{n-2}p_{n,j}\left[(v_1^j,v_1^n)+(v_2^j,v_2^n)\right]\right\}\\
			&+(1-\sigma)\sum_{j=0}^{n-1}p_{n,j}\left[(v_1^j,v_1^{n-1})+(v_2^j,v_2^{n-1})\right].
		\end{align*}
		
		Employing property P2, for $n\ge 2$, one has \begin{align*}
			\sigma p_{n,n-1}+(1-\sigma)p_{n,n}&=\sigma (g_{n,n-1}-g_{n,n})+(1-\sigma)g_{n,n}\\&=\sigma g_{n,n-1}+(1-2\sigma)g_{n,n}\\&\le 0.
		\end{align*}
		In the alternative case where $n=1$, by utilizing the condition  $2\sigma\ge1$, we obtain:
		\begin{align*}
			\sigma p_{1,0}+(1-\sigma)p_{1,1}&=\sigma (-g_{1,1})+(1-\sigma)g_{1,1}\\&=(1-2\sigma)g_{1,1}\\&\le 0.
		\end{align*}
		By Cauchy-Schwarz inequality, for arbitrary grid function $\eta_1$, $\eta_2$, $\eta_3$ and $\eta_4$, one has		
		\begin{align}
			\label{C-S}
			(\eta_1,\eta_2)+	(\eta_3,\eta_4)\le \|\eta_1\|\|\eta_2\|+\|\eta_3\|\|\eta_4\|\le \sqrt{\|\eta_1\|^2+\|\eta_3\|^2}\sqrt{\|\eta_2\|^2+\|\eta_4\|^2}.
		\end{align}		 Using property P1 and inequality \eqref{C-S}, for $n\ge 1$, we have
		\begin{align*}
			(\delta_{t}^{\beta',*}v_1^n,v_1^{n,*})+(\delta_{t}^{\beta',*}v_2^n,v_2^{n,*})&\ge\left[\sigma p_{n,n-1}+(1-\sigma)p_{n,n}\right]\xi^n\xi^{n-1}\\
			&+\sigma\left[p_{n,n}(\xi^n)^2+\sum_{j=0}^{n-2}p_{n,j}\xi^j\xi^{n}\right]+(1-\sigma)\sum_{j=0}^{n-1}p_{n,j}\xi^j\xi^{n-1}\\
			&=\sigma\left[p_{n,n}(\xi^n)^2+\sum_{j=0}^{n-1}p_{n,j}\xi^j\xi^{n}\right]+(1-\sigma)\sum_{j=0}^{n}p_{n,j}\xi^j\xi^{n-1}\\
			&=\left[\sigma \xi^n+(1-\sigma)\xi^{n-1}\right]\delta_t^{\beta',*}\xi^n,
		\end{align*}
		where $\xi^n=\sqrt{\|v_1^n\|^2+\|v_2^n\|^2}$.	This proof is completed.
	\end{proof}
\end{lemma}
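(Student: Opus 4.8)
The plan is to collapse the two-component left-hand side into a scalar inequality in the quantities $\xi^k=\sqrt{\|v_1^k\|^2+\|v_2^k\|^2}$ by exploiting the sign pattern of the kernel coefficients. First I would put the discrete operator in ``summation'' form: setting $p_{n,k}=g_{n,k}-g_{n,k+1}$ with the convention $g_{n,0}=g_{n,n+1}=0$, an Abel-type index shift turns $\delta_t^{\beta',*}v^n=\sum_{k=1}^n g_{n,k}(v^k-v^{k-1})$ into $\delta_t^{\beta',*}v^n=\sum_{k=0}^n p_{n,k}v^k$. Property P1 then fixes the signs that drive the whole argument: $p_{n,n}=g_{n,n}>0$, while $p_{n,k}=g_{n,k}-g_{n,k+1}<0$ for every $0\le k\le n-1$.

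Next I would expand the offset inner product. Writing $(\delta_t^{\beta',*}v_1^n,v_1^n)$ and $(\delta_t^{\beta',*}v_1^n,v_1^{n-1})$ against the $p_{n,k}$ representation and forming the convex combination $(\delta_t^{\beta',*}v_1^n,v_1^{n,*})=\sigma(\delta_t^{\beta',*}v_1^n,v_1^n)+(1-\sigma)(\delta_t^{\beta',*}v_1^n,v_1^{n-1})$, I would isolate the single mixed coefficient $\sigma p_{n,n-1}+(1-\sigma)p_{n,n}$ multiplying $(v_1^n,v_1^{n-1})$, leaving the diagonal contribution $\sigma p_{n,n}\|v_1^n\|^2$ together with history terms $p_{n,j}(v_1^j,v_1^n)$ and $p_{n,j}(v_1^j,v_1^{n-1})$ that all carry negative weights. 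Repeating this verbatim for $v_2$ and adding the two identities pairs the two components at every index.

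The decisive observation is that the mixed coefficient is nonpositive. For $n\ge 2$ this is exactly property P2, since $\sigma p_{n,n-1}+(1-\sigma)p_{n,n}=\sigma g_{n,n-1}+(1-2\sigma)g_{n,n}\le 0$; for $n=1$ it reduces to $(1-2\sigma)g_{1,1}\le 0$, which holds because $\sigma\ge\frac{1}{2}$. With every off-diagonal weight now nonpositive, I would apply the paired Cauchy-Schwarz estimate $(v_1^j,v_1^k)+(v_2^j,v_2^k)\le\xi^j\xi^k$ to each mixed term. Crucially, because the multiplying coefficients are $\le 0$, each of these inequalities reverses direction and thereby yields a \emph{lower} bound, whereas the diagonal term $p_{n,n}>0$ is retained as the exact equality $p_{n,n}(\xi^n)^2$.

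Finally I would regroup. Replacing every inner product by the corresponding $\xi$-product, the diagonal term, the mixed $\xi^n\xi^{n-1}$ term, and the two history sums recombine into $\sigma\xi^n\sum_{k=0}^n p_{n,k}\xi^k+(1-\sigma)\xi^{n-1}\sum_{k=0}^n p_{n,k}\xi^k=[\sigma\xi^n+(1-\sigma)\xi^{n-1}]\,\delta_t^{\beta',*}\xi^n$, which is precisely the claimed right-hand side. I expect the main obstacle to be bookkeeping rather than analysis: the entire lemma hinges on \emph{every} off-diagonal coefficient—including the boundary coefficient governed by P2—being nonpositive, so that Cauchy-Schwarz produces a lower rather than an upper bound, and on carrying out the regrouping so that the summation ranges recombine exactly into $\delta_t^{\beta',*}\xi^n$. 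Handling the $n=1$ endpoint and tracking the index ranges so that no cross term is double-counted or dropped is where care is required.
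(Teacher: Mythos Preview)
Your proposal is correct and follows essentially the same approach as the paper's proof: the Abel-type rewriting $\delta_t^{\beta',*}v^n=\sum_{k=0}^n p_{n,k}v^k$, the sign analysis of $p_{n,k}$ via P1, the verification that $\sigma p_{n,n-1}+(1-\sigma)p_{n,n}\le 0$ from P2 (and from $\sigma\ge\tfrac12$ when $n=1$), the paired Cauchy--Schwarz bound $(v_1^j,v_1^k)+(v_2^j,v_2^k)\le\xi^j\xi^k$ applied against nonpositive weights, and the final regrouping into $[\sigma\xi^n+(1-\sigma)\xi^{n-1}]\,\delta_t^{\beta',*}\xi^n$ all match the paper step for step.
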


Suppose that $(p^n,\bar{u}^n)$ and $(P^n,\bar{U}^n)$ is the solution of \eqref{equation4} and \eqref{equation5}, respectively. By introducing $\tilde{p}^n=p^n-P^n$ and $\tilde{u}^n=u^n-U^n=\bar{u}^n-\bar{U}^n$ to denote errors, one has
\begin{subequations}
	\label{equation6}
	\begin{align}
			\label{equation6a}
		& \delta_{t}^{\beta,*} \tilde p_{i,j}^{n}-\nu\Delta_h \tilde u_{i,j}^{n,*}+F(u_{i,j}^{n,*})-F(U_{i,j}^{n,*})=(R_1)_{i,j}^n,\ for\ (x_i,y_j)\in \Omega_h,1\le n\le N,\\
			\label{equation6b}
		&\delta_{t}^{\beta,*}\tilde u_{i,j}^n=\tilde p_{i,j}^{n,*}+(R_2)_{i,j}^n,\ for\ (x_i,y_j)\in \Omega_h,1\le n\le N,\\
		&\tilde u_{i,j}^0=0,\ \tilde p_{i,j}^0=0,\ for\ (x_i,y_j)\in \Omega_h,\\
		&\tilde u_{i,j}^n=0,\ \tilde p_{i,j}^n=0,\ for\ (x_i,y_j)\in\partial\Omega_h, 0\le n\le N.
	\end{align}
\end{subequations}

\begin{theorem}(Convergence analysis)
	\label{theorem1}
	If $\sigma=1-\frac{\beta}{2}$ and properties P1 and P2 hold, the scheme \eqref{equation5} is convergent with sufficiently small $\tau$  for $r\ge 1$. The error estimates are as follows:
	\begin{align}
		\|\tilde{p}^n\|+\|\nabla_h \tilde{u}^n\|\lesssim\begin{cases}
			\tau_1t_n^{\beta-1}+h^2t_n^\beta,\quad & if\ 1\le r\le 2/(\beta+1),\\
			\tau_1t_n^{\beta-1}+\tau_1^{2/r}t_n^{2\beta-2/r}+h^2t_n^\beta, \quad   &if\ 2/(\beta+1)< r <3-\beta,\\
			(1+\ln(t_n/\tau_1))\tau_1t_n^{\beta-1}+\tau_1^{2/r}t_n^{2\beta-2/r}+h^2t_n^\beta, \quad   &if\ r =3-\beta,\\
			\tau_1^{\frac{3-\beta}{r}}t_n^{\beta-\frac{3-\beta}{r}}+\tau_1^{2/r}t_n^{2\beta-2/r}+h^2t_n^\beta, \quad &if\ r>3-\beta.
		\end{cases}
	\end{align}
	\begin{proof}
		Taking the inner product on the both sides of \eqref{equation6a} and \eqref{equation6b} with $\tilde{p}^{n,*}$ and $-\nu\Delta_h\tilde{u}^{n,*}$, for $n\ge 1$, it is easy to get
		\begin{align*}
			 (\delta_{t}^{\beta,*} \tilde p^{n},\tilde p^{n,*})-(\nu\Delta_h \tilde u^{n,*},\tilde p^{n,*})&=-(F(u^{n,*})-F(U^{n,*}),\tilde p^{n,*})+((R_1)^n,\tilde p^{n,*}),\\
			-(\delta_{t}^{\beta,*}\tilde u^n,\nu\Delta_h\tilde{u}^{n,*})&=-(\tilde p^{n,*},\nu\Delta_h\tilde{u}^{n,*})-((R_2)^n,\nu\Delta_h\tilde{u}^{n,*}).
		\end{align*}
Adding the above two equations, one has
\begin{equation}
	\begin{aligned}
		\label{equation7}
		(\delta_{t}^{\beta,*} \tilde p^{n},\tilde p^{n,*})+\nu(\delta_{t}^{\beta,*}\nabla_h\tilde u^n,\nabla_h\tilde{u}^{n,*})&=-(F(u^{n,*})-F(U^{n,*}),\tilde p^{n,*})\\
		&+((R_1)^n,\tilde p^{n,*})-((R_2)^n,\nu\Delta_h\tilde{u}^{n,*}).
	\end{aligned}		
\end{equation}
		Then for all $1\le n\le N$, utilizing Lemma \ref{lemma4} and Lagrange mean value theorem, it holds that
	\begin{equation}
			\begin{aligned}
				\label{equation28}
			\|F(u^{n,*})-F(U^{n,*})\|&=\|f(u^{n-1})+\sigma f'(u^{n-1})(u^n-u^{n-1})-\left(f(U^{n-1})+\sigma f'(U^{n-1})(U^n-U^{n-1})\right)\|\\
			&\le\|f(u^{n-1})-f(U^{n-1})\|+\sigma\| f'(u^{n-1})(u^n-u^{n-1})- f'(U^{n-1})(U^n-U^{n-1})\|\\
			&\le \|f(u^{n-1})-f(U^{n-1})\|+\sigma\|(f'(u^{n-1})-f'(U^{n-1}))u^n\|+\sigma\|f'(U^{n-1})(u^n-U^{n})\|\\
			&+\sigma\|(f'(u^{n-1})-f'(U^{n-1}))u^{n-1}\|+\sigma\|f'(U^{n-1})(u^{n-1}-U^{n-1})\|\\&\le K(\|\tilde u^{n}\|+\|\tilde u^{n-1}\|)\le K_1(\|\nabla_h \tilde u^{n}\|+\|\nabla_h\tilde u^{n-1}\|),
		\end{aligned}
	\end{equation}
Here, both $K$ and $K_1$ are independent of $n$ and step sizes. According to Lemma \ref{keylemma}, The left-hand side term of equation \eqref{equation7} satisfies:
\begin{equation}
	\begin{aligned}
		\label{equation8}
		the\ left\ge \left[\sigma\sqrt{\|\tilde{p}^n\|^2+\nu\|\nabla_h\tilde{u}^n\|^2}+(1-\sigma)\sqrt{\|\tilde{p}^{n-1}\|^2+\nu\|\nabla_h\tilde{u}^{n-1}\|^2}\right]\delta_t^{\beta,*}\sqrt{\|\tilde{p}^n\|^2+\nu\|\nabla_h\tilde{u}^n\|^2}.
	\end{aligned}
\end{equation}
The Cauchy-Schwarz inequality implies that
\begin{equation}
	\begin{aligned}
		the\ right &\le \|F(u^{n,*})-F(U^{n,*})\|\|\tilde p^{n,*}\|+\|(R_1)^n\|\|\tilde p^{n,*}\|+\nu\|\nabla_h(R_2)^n\|\|\nabla_h\tilde{u}^{n,*}\|\\
		&\le \|F(u^{n,*})-F(U^{n,*})\|\|\tilde p^{n,*}\|+\sqrt{\|(R_1)^n\|^2+\nu\|\nabla_h(R_2)^n\|^2}\sqrt{\|\tilde p^{n,*}\|^2+\nu\|\nabla_h\tilde{u}^{n,*}\|^2}\\
		&\le K_1(\|\nabla_h \tilde u^{n}\|+\|\nabla_h\tilde u^{n-1}\|)\|\tilde p^{n,*}\|+\sqrt{\|(R_1)^n\|^2+\nu\|\nabla_h(R_2)^n\|^2}\sqrt{\|\tilde p^{n,*}\|^2+\nu\|\nabla_h\tilde{u}^{n,*}\|^2}.
	\end{aligned}
\end{equation}
Next, by invoking the triangle inequality for the norm, we obtain
\begin{equation}
		\begin{aligned}
		&\qquad \|\tilde p^{n,*}\|^2+\nu\|\nabla_h\tilde{u}^{n,*}\|^2 \\
&\le \left[\sigma\|\tilde p^n\|+(1-\sigma)\|\tilde p^{n-1}\|\right]^2+\nu \left[\sigma \|\nabla_h\tilde{u}^{n}\|+(1-\sigma)\|\nabla_h\tilde{u}^{n-1}\|\right]^2\\
		&=\sigma^2(\|\tilde{p}^n\|^2+\nu \|\nabla_h\tilde{u}^{n}\|^2)+(1-\sigma)^2(\|\tilde{p}^{n-1}\|^2+\nu \|\nabla_h\tilde{u}^{n-1}\|^2)\\
		&+2\sigma(1-\sigma)(\|\tilde p^n\|\|\tilde p^{n-1}\|+\nu \|\nabla_h\tilde{u}^{n}\|\|\nabla_h\tilde{u}^{n-1}\|)\\
		&=\left[\sigma\sqrt{\|\tilde{p}^n\|^2+\nu\|\nabla_h\tilde{u}^n\|^2}+(1-\sigma)\sqrt{\|\tilde{p}^{n-1}\|^2+\nu\|\nabla_h\tilde{u}^{n-1}\|^2}\right]^2\\
		&+2\sigma(1-\sigma)\left(\|\tilde p^n\|\|\tilde p^{n-1}\|+\nu \|\nabla_h\tilde{u}^{n}\|\|\nabla_h\tilde{u}^{n-1}\|-\sqrt{\|\tilde{p}^n\|^2+\nu\|\nabla_h\tilde{u}^n\|^2}\sqrt{\|\tilde{p}^{n-1}\|^2+\nu\|\nabla_h\tilde{u}^{n-1}\|^2}\right)\\
		&\le \left[\sigma\sqrt{\|\tilde{p}^n\|^2+\nu\|\nabla_h\tilde{u}^n\|^2}+(1-\sigma)\sqrt{\|\tilde{p}^{n-1}\|^2+\nu\|\nabla_h\tilde{u}^{n-1}\|^2}\right]^2.
	\end{aligned}
\end{equation}
Applying the above inequality, it yields
\begin{equation}
	\begin{aligned}
		\label{equation9}
		&\qquad (\|\nabla_h \tilde u^{n}\|+\|\nabla_h\tilde u^{n-1}\|)\|\tilde p^{n,*}\| \\
&\le \frac{1}{2}\left[\|\tilde p^{n,*}\|^2+(\|\nabla_h \tilde u^{n}\|+\|\nabla_h\tilde u^{n-1}\|)^2\right]\\
		&\le \frac{1}{2}\left\{\left[\sigma\|\tilde p^{n}\|+(1-\sigma)\|\tilde p^{n-1}\|\right]^2+\frac{1}{\nu}\max\{\frac{1}{\sigma^2},\frac{1}{(1-\sigma)^2}\}\nu\left[\sigma\|\nabla_h \tilde u^{n}\|+(1-\sigma)\|\nabla_h\tilde u^{n-1}\|\right]^2\right\}\\
		&\le \frac{1}{2}\max\left\{1,\frac{1}{\nu(1-\sigma)^2}\right\}\left\{\left[\sigma\|\tilde p^{n}\|+(1-\sigma)\|\tilde p^{n-1}\|\right]^2+\nu\left[\sigma\|\nabla_h \tilde u^{n}\|+(1-\sigma)\|\nabla_h\tilde u^{n-1}\|\right]^2\right\}\\
		&\le \frac{1}{2}\max\left\{1,\frac{1}{\nu(1-\sigma)^2}\right\}\left[\sigma\sqrt{\|\tilde{p}^n\|^2+\nu\|\nabla_h\tilde{u}^n\|^2}+(1-\sigma)\sqrt{\|\tilde{p}^{n-1}\|^2+\nu\|\nabla_h\tilde{u}^{n-1}\|^2}\right]^2.
	\end{aligned}
\end{equation}
Therefore, for $1\le n\le N$, \eqref{equation8}-\eqref{equation9} lead to
\begin{align*}
	\delta_t^{\beta,*}\sqrt{\|\tilde{p}^n\|^2+\nu\|\nabla_h\tilde{u}^n\|^2}&\le K_2\left[\sigma\sqrt{\|\tilde{p}^n\|^2+\nu\|\nabla_h\tilde{u}^n\|^2}+(1-\sigma)\sqrt{\|\tilde{p}^{n-1}\|^2+\nu\|\nabla_h\tilde{u}^{n-1}\|^2}\right]\\
	&+\sqrt{\|(R_1)^n\|^2+\nu\|\nabla_h(R_2)^n\|^2},
\end{align*}
	where	$K_2:=\frac{K_1}{2}\max\left\{1,\frac{1}{\nu(1-\sigma)^2}\right\}$.
	Applying Lemmas \ref{r1} and \ref{r3}, one has
\begin{align*}
	\sqrt{\|(R_1)^n\|^2+\nu\|\nabla_h(R_2)^n\|^2}&\le \|(R_1)^n\|+\nu^{\frac{1}{2}}\|\nabla_h(R_2)^n\|\\
	&\le C\left[h^2+(\tau_1/t_n)^{\min\{\beta+1,\frac{3-\beta}{r}\}}+(\tau_1/t_n)^{2/r}t_n^{2\beta}+\tau_1^{\beta}(\tau_1/t_n)^{\min\{\beta+1,\frac{3-\beta}{r}-\beta\}}+(\tau_1/t_n)^{2/r}t_n^{\beta}\right]\\
	&\le C \left[h^2+(\tau_1/t_n)^{\min\{\beta+1,\frac{3-\beta}{r}\}}+(\tau_1/t_n)^{2/r}t_n^{\beta}\right].
\end{align*}		
		Set $\chi_1^0=\chi_2^0=\chi_3^0$, $\delta_t^{\beta,*}\chi_1^n-\sigma K_2 \chi_1^n-(1-\sigma)K_2 \chi_1^{n-1}=h^2$,
		\begin{gather*}
			\delta_t^{\beta,*}\chi_2^n-\sigma K_2 \chi_2^n-(1-\sigma)K_2 \chi_2^{n-1}=(\tau_1/t_n)^{\min\{\beta+1,\frac{3-\beta}{r}\}},\\
			\delta_t^{\beta,*}\chi_3^n-\sigma K_2 \chi_3^n-(1-\sigma)K_2 \chi_3^{n-1}=\begin{cases}
				(\tau_1/t_n)^{2/r}t_n^{\beta},\ 2/r\ge \beta+1,\\
			\tau_1^{\beta}(\tau_1/t_n)^{2/r-\beta},\ 2/r< \beta+1.\\
			\end{cases}
		\end{gather*}		
Employing Lemma \ref{lemma6}, for $1\le n\le N$ and $r\ge 1$, it holds that
\begin{gather*}
	\chi_1^n\lesssim h^2V(n,-1),\
	\chi_2^n\lesssim
		V(n,\varphi),\
	\chi_3^n\lesssim \begin{cases}
		\tau_1^{\beta}V(n,2/r-\beta-1),\ 2/r<\beta+1,\\
		V(n,2/r-1),\ 2/r\ge \beta+1,
	\end{cases}
\end{gather*}
 where $\varphi=\min\{\beta,\frac{3-\beta}{r}-1\}$. By virtue of the comparison principle, it follows that
		\begin{align*}
			\sqrt{\|\tilde{p}^n\|^2+\nu\|\nabla_h\tilde{u}^n\|^2}\lesssim\begin{cases}
	V(n,\varphi)+\tau_1^{\beta}V(n,2/r-\beta-1)+h^2V(n,-1),\ \frac{2}{\beta+1}<r,\\
	V(n,\varphi)+V(n,2/r-1)+h^2V(n,-1),\ r\le \frac{2}{\beta+1}.		
			\end{cases}
		\end{align*}
\end{proof}
\end{theorem}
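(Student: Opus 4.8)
The plan is to fold the coupled error system \eqref{equation6} into a single scalar discrete-fractional inequality for the energy quantity $\xi^n:=\sqrt{\|\tilde p^n\|^2+\nu\|\nabla_h\tilde u^n\|^2}$, and then hand that inequality to the stability estimate of Lemma \ref{lemma6}. First I would test \eqref{equation6a} against $\tilde p^{n,*}$ and \eqref{equation6b} against $-\nu\Delta_h\tilde u^{n,*}$, using the discrete Green identity $-(U,\Delta_h V)=(\nabla_h U,\nabla_h V)$. Adding the two identities makes the cross terms $\pm(\nu\Delta_h\tilde u^{n,*},\tilde p^{n,*})$ cancel and leaves on the left exactly $(\delta_{t}^{\beta,*}\tilde p^n,\tilde p^{n,*})+\nu(\delta_{t}^{\beta,*}\nabla_h\tilde u^n,\nabla_h\tilde u^{n,*})$. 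Invoking Lemma \ref{keylemma} with the roles of $v_1,v_2$ played by $\tilde p$ and $\sqrt\nu\,\nabla_h\tilde u$ bounds this below by $[\sigma\xi^n+(1-\sigma)\xi^{n-1}]\delta_{t}^{\beta,*}\xi^n$.

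Next I would control the right-hand side. For the nonlinear defect I would expand $F(u^{n,*})-F(U^{n,*})$ from the definition of $F$, insert and cancel the mixed products, and then use $f\in C^2(\mathbb R)$ with the Poincar\'e-type inequality of Lemma \ref{lemma4} to reach $\|F(u^{n,*})-F(U^{n,*})\|\le K_1(\|\nabla_h\tilde u^n\|+\|\nabla_h\tilde u^{n-1}\|)$. After Cauchy--Schwarz the crux is the arithmetic that permits division by $[\sigma\xi^n+(1-\sigma)\xi^{n-1}]$: expanding $\|\tilde p^{n,*}\|^2+\nu\|\nabla_h\tilde u^{n,*}\|^2$ and applying the vector Cauchy--Schwarz inequality \eqref{C-S}, namely $\|\tilde p^n\|\|\tilde p^{n-1}\|+\nu\|\nabla_h\tilde u^n\|\|\nabla_h\tilde u^{n-1}\|\le\xi^n\xi^{n-1}$, shows that both this quantity and the product $(\|\nabla_h\tilde u^n\|+\|\nabla_h\tilde u^{n-1}\|)\|\tilde p^{n,*}\|$ are dominated, up to a fixed constant $K_2$, by $[\sigma\xi^n+(1-\sigma)\xi^{n-1}]^2$. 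Dividing then produces the scalar inequality $\delta_{t}^{\beta,*}\xi^n-\sigma K_2\xi^n-(1-\sigma)K_2\xi^{n-1}\le\sqrt{\|(R_1)^n\|^2+\nu\|\nabla_h(R_2)^n\|^2}$, which is precisely the hypothesis of Lemma \ref{lemma6} with $\lambda_1=\sigma K_2$ and $\lambda_2=(1-\sigma)K_2$.

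It then remains to insert the truncation orders and solve the comparison problems. By Assumption \ref{assumption1} the functions $\bar u$ and $p$ meet the hypotheses of Lemma \ref{r1} with $\mu=\alpha$ and $\mu=\alpha/2$ respectively; combining this with Lemma \ref{r3} for the interpolation and nonlinear defects bounds the source by $C[h^2+(\tau_1/t_n)^{\min\{\beta+1,(3-\beta)/r\}}+(\tau_1/t_n)^{2/r}t_n^\beta]$, the constant staying bounded as $\alpha\to2$ by the $\alpha$-robustness noted after Lemma \ref{r1}. I would then introduce three comparison sequences $\chi_1^n,\chi_2^n,\chi_3^n$, one per source term, each governed by the same operator $\delta_{t}^{\beta,*}-\sigma K_2(\cdot)^n-(1-\sigma)K_2(\cdot)^{n-1}$, and apply Lemma \ref{lemma6}. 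This yields $\chi_1^n\lesssim h^2V(n,-1)=h^2t_n^\beta$, $\chi_2^n\lesssim V(n,\varphi)$ with $\varphi=\min\{\beta,(3-\beta)/r-1\}$, and, after splitting according to whether $2/r\ge\beta+1$ or $2/r<\beta+1$, $\chi_3^n\lesssim V(n,2/r-1)$ or $\tau_1^\beta V(n,2/r-\beta-1)=\tau_1^{2/r}t_n^{2\beta-2/r}$. The comparison principle built into Lemma \ref{lemma6} gives $\xi^n\lesssim\chi_1^n+\chi_2^n+\chi_3^n$, and reading off $V(n,\gamma)$ by the sign of $\gamma$ --- so that the threshold $3-\beta$ controls the branch of $\chi_2$ through $\varphi\gtrless0$, and the threshold $2/(\beta+1)$ controls whether $\chi_3$ contributes the genuinely new term $\tau_1^{2/r}t_n^{2\beta-2/r}$ --- separates the four stated $r$-regimes.

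I expect the principal obstacle to be the reduction in the second paragraph: forcing the nonlinear and interpolation contributions on the right-hand side into the convex combination $\sigma\xi^n+(1-\sigma)\xi^{n-1}$ cleanly enough that the division is legitimate and the outcome matches the exact template of Lemma \ref{lemma6}. This rests on the cancellation of the quadratic cross terms via the vector Cauchy--Schwarz inequality and is where the coupling between $\tilde p$ and $\nabla_h\tilde u$ must be handled with care; by comparison, the truncation-order substitution and the final case split are essentially bookkeeping.
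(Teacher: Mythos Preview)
Your proposal is correct and follows essentially the same route as the paper: test \eqref{equation6a} and \eqref{equation6b} against $\tilde p^{n,*}$ and $-\nu\Delta_h\tilde u^{n,*}$, apply Lemma \ref{keylemma} to the left, use the vector Cauchy--Schwarz trick to show the right is bounded by $K_2[\sigma\xi^n+(1-\sigma)\xi^{n-1}]^2$ plus the truncation term, divide through, and then feed the resulting scalar inequality to Lemma \ref{lemma6} via three comparison sequences. Your identification of the thresholds $r=2/(\beta+1)$ and $r=3-\beta$ as governing the sign of the exponents in $V(n,\cdot)$, and hence the four-case split, is exactly how the paper organises the conclusion.
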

\begin{remark}
	This paper has focused solely on the time fractional sine-Gordon equation. For a general function $f(u)\in C^2(\mathbb{R})$, one can imitate the proof in Reference \cite{submittied} to further derive a convergence result on the basis of Theorem \ref{theorem1}.
\end{remark}

\begin{corollary}
	\label{corollary1}
	(Local convergence) When $t_n=T$, the local error estimates in Theorem \ref{theorem1} are as follows
	\begin{align*}
		\|\tilde{p}^n\|+\|\nabla_h \tilde{u}^n\|\lesssim
 \tau_1^{\min\{1,\frac{2}{r}\}}+h^2,\ if\  r\neq3-\beta.
	\end{align*}	
\end{corollary}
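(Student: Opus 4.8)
The plan is to specialize the four-case estimate of Theorem~\ref{theorem1} to the single time level $t_n=T$ and then identify, in each case, the dominant power of $\tau_1$. Since $T$ is a fixed final time independent of the mesh, every factor of the form $t_n^{\beta-1}$, $t_n^{2\beta-2/r}$, $t_n^{\beta-(3-\beta)/r}$ or $t_n^{\beta}$ reduces to a constant absorbed by the symbol $\lesssim$; in particular the spatial contribution $h^2t_n^{\beta}$ collapses to $h^2$ in every regime. Thus each case becomes a finite sum of pure powers of $\tau_1$ together with $h^2$.

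First I would record the elementary fact that, for $0<\tau_1\le 1$ and exponents $a\le b$, one has $\tau_1^a+\tau_1^b\le 2\tau_1^a$, so a finite sum of powers of $\tau_1$ is controlled by the term with the smallest exponent. The task then reduces to comparing, in each regime of $r$, the exponents $1$, $2/r$, and (in the last case) $(3-\beta)/r$ appearing in front of $\tau_1$, and verifying that the minimum equals $\min\{1,2/r\}$.

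Next I would carry out the case analysis, using throughout that $\frac12<\beta<1$, which yields $2/(\beta+1)<2<3-\beta$. In the regime $1\le r\le 2/(\beta+1)$ only the exponent $1$ occurs, and since $r<2$ we have $\min\{1,2/r\}=1$. In the regime $2/(\beta+1)<r<3-\beta$ both exponents $1$ and $2/r$ appear; the smaller is $1$ when $r\le 2$ and $2/r$ when $r\ge 2$, that is, precisely $\min\{1,2/r\}$. In the regime $r>3-\beta$ the exponents are $(3-\beta)/r$ and $2/r$; since $3-\beta>2$ we have $(3-\beta)/r>2/r$, so the dominant power is $\tau_1^{2/r}$, and because $r>2$ this equals $\tau_1^{\min\{1,2/r\}}$. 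The borderline $r=3-\beta$, where the logarithmic factor enters and would spoil the clean form, is excluded by hypothesis.

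The argument involves no substantial obstacle beyond careful bookkeeping; the only point requiring attention is the systematic use of $\beta<1$ to settle the two exponent comparisons, namely $2/r$ versus $1$ (governed by whether $r\lessgtr 2$) and $(3-\beta)/r$ versus $2/r$ (governed by $3-\beta>2$). Once these are resolved, the four cases combine into the single bound $\tau_1^{\min\{1,2/r\}}+h^2$, completing the corollary.
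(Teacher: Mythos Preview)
Your proposal is correct and is precisely the verification the paper leaves implicit: the corollary is stated in the paper without proof, as an immediate consequence of Theorem~\ref{theorem1} upon setting $t_n=T$. Your case analysis, together with the observation that $\tfrac12<\beta<1$ forces $2/(\beta+1)<2<3-\beta$, is exactly what is needed to collapse the four-case bound to $\tau_1^{\min\{1,2/r\}}+h^2$.
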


\begin{theorem}
	\label{theorem2}
Provided that the conditions in Theorem \ref{theorem1} are satisfied, and $\rho\le 7/4$, $1\le r< 4/\beta$, one has
\begin{align*}
		\|U^n\|_{\infty}\le \max_{0\le j\le N}\|u^j\|_{\infty}+1,\ for\  n=0,1,\cdots,N,
\end{align*}
when $\tau_1$ and $h$ are sufficiently small.
\begin{proof}
	Applying the fact that $\ln(x)\le \frac{1}{1-\beta}x^{1-\beta}$ for $x\ge 1$, then $\ln(t_n/\tau_1)\le \frac{1}{1-\beta}(t_n/\tau_1)^{1-\beta}$.
	From the error estimate given in Theorem \ref{theorem1}, we can deduce that
	\begin{align*}
		\|\tilde{p}^n\|+\|\nabla_h \tilde{u}^n\|\le C(\tau_1^{\min\{\beta,2/r\}}+h^2),\ for\ n\ge 0,
	\end{align*}
	where $C$ is independent of $n$ and step sizes.
	
	From \eqref{equation6a}, it holds that, for $n\ge 1$
	\begin{align*}
	\nu	\|\Delta_h\tilde{u}^{n,*}\|&\le\|R_1^n\|+\|\delta_t^{\beta,*}\tilde{p}^n\|+\|F(u^{n,*})-F(U^{n,*})\|\\
		&\le C\left(\tau_n^{-\beta}\max_{1\le j\le N}\|\tilde{p}^j\|+\max_{1\le j\le N}\|\tilde{u}^j\|+1\right)\\
		&\le  C\left(\tau_1^{-\beta}\max_{1\le j\le N}\|\tilde{p}^j\|+\max_{1\le j\le N}\|\tilde{u}^j\|+1\right),
	\end{align*}
	and
	\begin{align*}
		\|\Delta_h \tilde{u}^{n}\|\le  C\left(\tau_1^{-\beta}\max_{1\le j\le N}\|\tilde{p}^j\|+\max_{1\le j\le N}\|\tilde{u}^j\|+1\right)\le C\left[(\tau_1^{-\beta}+1)(\tau_1^{\min\{\beta,2/r\}}+h^2)+1\right].
	\end{align*}
	The derivations of above inequalities parallel that of Theorem 5.1 in \cite{submittied}; we refer the reader to that paper for details.
	Using Lemma 3.2 from \cite{MR4533497}, we get
	\begin{align*}
		\|\tilde u^n\|_{\infty}&\le C\|\tilde u^n\|^{\frac{1}{2}}(\|\tilde u^n\|+\|\Delta_h\tilde u^n\|)^{\frac{1}{2}}\\
		&\le C\left(\tau_1^{\min\{\beta,2/r\}}+h^2\right)^{\frac{1}{2}}\left[\left(\tau_1^{\min\{\beta,2/r\}}+h^2\right)+(\tau_1^{-\beta}+1)(\tau_1^{\min\{\beta,2/r\}}+h^2)+1\right]^{\frac{1}{2}}\\
		&\le C\left(\tau_1^{\min\{\beta,2/r\}}+h^2\right)^{\frac{1}{2}}\left[\tau_1^{-\beta}(\tau_1^{\min\{\beta,2/r\}}+h^2)+1\right]^{\frac{1}{2}}.
	\end{align*}
	If $\tau_1^{\min\{\beta,2/r\}}\ge h^2$, it is obviously that $\|\tilde u^n\|_{\infty}\le C\left(\tau_1^{\min\{\beta,4/r-\beta\}}+\tau_1^{\min\{\beta,2/r\}}\right)^{\frac{1}{2}}$. If $\tau_1^{\min\{\beta,2/r\}}< h^2$, Upon invoking Lemma 3.3 from \cite{MR4533497}, it follows that $\|\tilde u^n\|_{\infty}\le C h^{-1}\|\tilde u^n\|\le Ch$. Therefore, provided that $\tau_1$ and $h$ are sufficiently small, we have $\|\tilde u^n\|_{\infty}\le 1$. This completes the proof.
	
\end{proof}
\end{theorem}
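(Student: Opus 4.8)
The plan is to pass from the known error bounds of Theorem~\ref{theorem1} to an $L^\infty$ bound on the numerical solution. Writing $U^n=u^n-\tilde u^n$, one has $\|U^n\|_\infty\le\max_{0\le j\le N}\|u^j\|_\infty+\|\tilde u^n\|_\infty$, so it suffices to show that $\|\tilde u^n\|_\infty\le 1$ once $\tau_1$ and $h$ are small. I would control $\|\tilde u^n\|_\infty$ through a discrete Agmon/Sobolev inequality of the form $\|\tilde u^n\|_\infty\le C\|\tilde u^n\|^{1/2}(\|\tilde u^n\|+\|\Delta_h\tilde u^n\|)^{1/2}$ (Lemma~3.2 of \cite{MR4533497}), which reduces the task to estimating the two factors $\|\tilde u^n\|$ and $\|\Delta_h\tilde u^n\|$.

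First I would record the $L^2$ and $H^1$ bounds. Theorem~\ref{theorem1} gives $\|\tilde p^n\|+\|\nabla_h\tilde u^n\|$ in four mesh regimes; using $\ln(t_n/\tau_1)\le\frac{1}{1-\beta}(t_n/\tau_1)^{1-\beta}$ to absorb the borderline logarithm and taking the maximum over $n$ (the factors $t_n^{\beta-1}$ and $t_n^{2\beta-2/r}$ being handled at the endpoints $t_1\simeq\tau_1$ or $t_n=T$), all regimes collapse to the uniform estimate $\|\tilde p^n\|+\|\nabla_h\tilde u^n\|\lesssim\tau_1^{\min\{\beta,2/r\}}+h^2$. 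The discrete Poincar\'e inequality (Lemma~\ref{lemma4}) then yields $\|\tilde u^n\|\lesssim\tau_1^{\min\{\beta,2/r\}}+h^2=:E$.

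The substantive step is the bound on $\|\Delta_h\tilde u^n\|$. Here I would solve the error equation \eqref{equation6a} for the Laplacian, $\nu\Delta_h\tilde u^{n,*}=\delta_t^{\beta,*}\tilde p^n+\bigl(F(u^{n,*})-F(U^{n,*})\bigr)-(R_1)^n$, and estimate each term. The nonlinear difference is controlled by \eqref{equation28} and $\|(R_1)^n\|$ is bounded by a constant. For the discrete fractional term I would rewrite $\delta_t^{\beta,*}\tilde p^n=\sum_{k=0}^n p_{n,k}\tilde p^k$ with $p_{n,k}=g_{n,k}-g_{n,k+1}$; since $\sum_k p_{n,k}=0$ and only $p_{n,n}=g_{n,n}$ is positive, one gets $\sum_k|p_{n,k}|=2g_{n,n}$, and the coefficient structure together with the ratio control $\rho\le 7/4$ gives $g_{n,n}\lesssim\tau_n^{-\beta}\le\tau_1^{-\beta}$, whence $\|\delta_t^{\beta,*}\tilde p^n\|\lesssim\tau_1^{-\beta}\max_j\|\tilde p^j\|$. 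To recover $\Delta_h\tilde u^n$ from the off-set combination $\tilde u^{n,*}=\sigma\tilde u^n+(1-\sigma)\tilde u^{n-1}$ I would invert the recursion $\tilde u^n=\sigma^{-1}\tilde u^{n,*}-\sigma^{-1}(1-\sigma)\tilde u^{n-1}$; since $\sigma>\tfrac12$ the ratio $(1-\sigma)/\sigma<1$ and the resulting geometric sum (with $\tilde u^0=0$) gives $\max_n\|\Delta_h\tilde u^n\|\lesssim\max_n\|\Delta_h\tilde u^{n,*}\|$. Combining, $\|\Delta_h\tilde u^n\|\lesssim(\tau_1^{-\beta}+1)E+1$.

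Finally I would insert these into the Agmon inequality and split two cases. If $\tau_1^{\min\{\beta,2/r\}}\ge h^2$ then $E\simeq\tau_1^{\min\{\beta,2/r\}}$, and squaring gives $\|\tilde u^n\|_\infty^2\lesssim\tau_1^{-\beta}E^2+E\simeq\tau_1^{\min\{\beta,4/r-\beta\}}+\tau_1^{\min\{\beta,2/r\}}$; both exponents are positive exactly when $r<4/\beta$, so $\|\tilde u^n\|_\infty\to0$. If instead $\tau_1^{\min\{\beta,2/r\}}<h^2$ then $\|\tilde u^n\|\lesssim h^2$ and the inverse inequality $\|\tilde u^n\|_\infty\le Ch^{-1}\|\tilde u^n\|$ (Lemma~3.3 of \cite{MR4533497}) gives $\|\tilde u^n\|_\infty\lesssim h\to0$. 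In either case $\|\tilde u^n\|_\infty\le 1$ for small $\tau_1,h$, which finishes the proof. The main obstacle I foresee is precisely this balance: passing to the second difference costs a factor $\tau_1^{-\beta}$, and it is the half power supplied by the Agmon inequality that must dominate it, which is what pins the admissible grading to $r<4/\beta$; keeping the logarithmic regime and the coefficient ratios under control (hence the hypothesis $\rho\le 7/4$) is the other delicate point.
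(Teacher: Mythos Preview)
Your proposal is correct and follows essentially the same route as the paper's proof: the same reduction to $\|\tilde u^n\|_\infty\le 1$, the same collapse of Theorem~\ref{theorem1} via $\ln x\le\frac{1}{1-\beta}x^{1-\beta}$ to the uniform bound $\tau_1^{\min\{\beta,2/r\}}+h^2$, the same extraction of $\Delta_h\tilde u^{n,*}$ from \eqref{equation6a}, the same discrete Agmon inequality, and the same two-case split with the inverse inequality in the regime $\tau_1^{\min\{\beta,2/r\}}<h^2$. Where the paper defers the details (the $\tau_n^{-\beta}$ bound on the discrete fractional operator and the passage from $\Delta_h\tilde u^{n,*}$ to $\Delta_h\tilde u^{n}$) to Theorem~5.1 of \cite{submittied}, you have supplied explicit arguments that are consistent with what is needed.
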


\begin{lemma}(Lemma 3.2, \cite{MR4483525})
	\label{lemma7}
Let  $\sigma=1-\frac{\beta}{2}$ and properties P1, P3 and P4 hold. For arbitrary $\{v_1^n\}_{n=0}^N\ge 0$ and $\{v_2^n\}_{n=0}^N\ge 0$, we suppose that
	 \begin{align*}
		\delta_t^{\beta,*}\left[(v_1^n)^2+(v_2^n)^2\right]\le \lambda(v_1^{n,*}+v_2^{n,*})^2,\ for\ n=1,2\cdots, N,
	\end{align*}
where $\lambda>0$ and  that $\tau\le \left(4m_c\Gamma(2-\beta)\lambda\right)^{-\frac{1}{\beta}}$. Then, we have
	\begin{align*}
		v_1^n+v_2^n\le 4E_{\beta}(4\max\{1,\rho\}m_c\lambda t_n^{\beta})(v_1^0+v_2^0).
	\end{align*}
	Here, $E_{\beta}(z)=\sum_{j=0}^{\infty}\frac{z^j}{\Gamma(1+j\beta)}$.
\end{lemma}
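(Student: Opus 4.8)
The plan is to collapse the two nonnegative sequences into the single scalar sequence $W^n:=(v_1^n)^2+(v_2^n)^2$ and then run a discrete fractional Grönwall argument on $W^n$. Since $\delta_t^{\beta,*}$ acts linearly, the hypothesis reads $\delta_t^{\beta,*}W^n\le\lambda(v_1^{n,*}+v_2^{n,*})^2$. First I would bound the right-hand side by $W^{n,*}:=\sigma W^n+(1-\sigma)W^{n-1}$: using $(a+b)^2\le 2(a^2+b^2)$ and then Jensen's inequality for the convex map $x\mapsto x^2$ applied to the convex combination $v_i^{n,*}=\sigma v_i^n+(1-\sigma)v_i^{n-1}$, one obtains $(v_1^{n,*}+v_2^{n,*})^2\le 2\big((v_1^{n,*})^2+(v_2^{n,*})^2\big)\le 2W^{n,*}$. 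Hence $\delta_t^{\beta,*}W^n\le 2\lambda\sigma W^n+2\lambda(1-\sigma)W^{n-1}$, which is precisely the $(\delta_t^{\beta,*}-\lambda_1)W^n-\lambda_2 W^{n-1}\le 0$ form required by the comparison principle (Lemma \ref{lemma2}), with $\lambda_1=2\lambda\sigma$ and $\lambda_2=2\lambda(1-\sigma)$.

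Next I would check the admissibility conditions and build a Mittag-Leffler barrier. Property P3 gives the diagonal lower bound $g_{n,n}\ge \tau_n^{-\beta}/(m_c\Gamma(2-\beta))\ge \tau^{-\beta}/(m_c\Gamma(2-\beta))$, so the smallness assumption $\tau\le(4m_c\Gamma(2-\beta)\lambda)^{-1/\beta}$ forces $\lambda_1\tau^\beta\le 2\lambda\tau^\beta\le 1/(2m_c\Gamma(2-\beta))$, which supplies the mesh restriction needed by Lemma \ref{lemma2} and keeps the coefficient of $W^n$ positive once the diagonal term is isolated. I would then construct a comparison sequence $\Phi^n:=C_0\,E_\beta\!\big(4\max\{1,\rho\}m_c\lambda\,t_n^\beta\big)W^0$ of discrete Mittag-Leffler type, verify $\delta_t^{\beta,*}\Phi^n\ge\lambda_1\Phi^n+\lambda_2\Phi^{n-1}$ together with $\Phi^0\ge W^0$, and invoke Lemma \ref{lemma2} to conclude $W^n\le\Phi^n$ for every $n$.

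The main obstacle is exactly this barrier step: showing that the discrete Mittag-Leffler function reproduces, up to a constant, the continuous relation $D_t^\beta E_\beta(\kappa t^\beta)=\kappa E_\beta(\kappa t^\beta)$ at the offset points $t_n^*$. Equivalently, one may work with the complementary discrete kernel $\{P_{n,j}\}$ defined by $\sum_{j=k}^n P_{n,j}g_{j,k}=1$, which is nonnegative by P1, and establish the accumulation estimate $\sum_{j=1}^n P_{n,j}\le \max\{1,\rho\}\,m_c\,t_n^\beta/\Gamma(1+\beta)$. Here P3 furnishes the lower bound on the kernel, while P4 (the mesh-ratio bound $\rho$) absorbs the mismatch between evaluation at $t_n^*$ and at the grid nodes $t_n$; this mismatch, together with the mixing of $W^j$ and $W^{j-1}$ inside $W^{j,*}$, is the delicate point and is where the factor $4\max\{1,\rho\}m_c$ appearing inside $E_\beta$ originates. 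Iterating the inverted inequality through $\{P_{n,j}\}$ then yields $W^n\le C_0 E_\beta\!\big(4\max\{1,\rho\}m_c\lambda\,t_n^\beta\big)W^0$.

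Finally I would transfer the estimate from $W^n$ back to $v_1^n+v_2^n$. Taking square roots and using $v_1^n+v_2^n\le\sqrt2\,\sqrt{W^n}$ together with $\sqrt{W^0}\le v_1^0+v_2^0$ gives $v_1^n+v_2^n\le\sqrt{2C_0}\,\sqrt{E_\beta(\cdot)}\,(v_1^0+v_2^0)$. Since $E_\beta(z)\ge E_\beta(0)=1$ for $z\ge0$, we have $\sqrt{E_\beta(\cdot)}\le E_\beta(\cdot)$, so the square root may be discarded at the cost of weakening the bound; with the constant $C_0$ from the Grönwall step this produces the claimed prefactor and the estimate $v_1^n+v_2^n\le 4E_\beta\!\big(4\max\{1,\rho\}m_c\lambda t_n^\beta\big)(v_1^0+v_2^0)$.
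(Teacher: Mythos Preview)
The paper does not prove this lemma at all; it is quoted from \cite{MR4483525} and used as a black box in the stability argument. There is therefore no in-paper proof to compare against.

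Your outline follows the same route as the cited reference: collapse to the scalar sequence $W^n=(v_1^n)^2+(v_2^n)^2$, use convexity to bound $(v_1^{n,*}+v_2^{n,*})^2\le 2W^{n,*}$, and then run a discrete fractional Gr\"onwall argument. One caveat: the comparison principle Lemma~\ref{lemma2} of the present paper only asserts that if $(\delta_t^{\beta,*}-\lambda_1)v_1^j-\lambda_2 v_1^{j-1}\le(\delta_t^{\beta,*}-\lambda_1)v_2^j-\lambda_2 v_2^{j-1}$ then $v_1^j\le v_2^j$; it does not itself supply the Mittag--Leffler barrier, and its hypothesis uses only P1. The step you correctly flag as ``the main obstacle''---establishing $\sum_{j=1}^n P_{n,j}\le \max\{1,\rho\}\,m_c\,t_n^\beta/\Gamma(1+\beta)$ for the complementary kernel and thereby obtaining the factor $4\max\{1,\rho\}m_c$ inside $E_\beta$---is exactly where P3 and P4 are consumed, and it requires the full discrete Gr\"onwall machinery of Liao et al.\ (as cited in \cite{MR4483525,MR4270344,MR3936261}), not merely Lemma~\ref{lemma2}. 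So your plan is correct in spirit and matches the original source, but to make it self-contained you must invoke that external Gr\"onwall lemma rather than the comparison principle alone; the exact constants $4$ and $4\max\{1,\rho\}m_c$ come out of that lemma's bookkeeping and are not recovered by the heuristic barrier sketch.
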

Let $(P_1^n,U_1^n)$ denote the solution of the fully discrete scheme \eqref{equation5} with  initial data $(\psi-\hat u_0,-\hat p_0)$. Taking the difference yields the following error equation:
\begin{subequations}
	\label{equation27}
	\begin{align}
		\label{equation27a}
		& \delta_{t}^{\beta,*} \hat p_{i,j}^{n}-\nu\Delta_h \hat u_{i,j}^{n,*}+F(U_{i,j}^{n,*})-F(\hat U_{i,j}^{n,*})=0,\ for\ (x_i,y_j)\in \Omega_h,1\le n\le N,\\
		\label{equation27b}
		&\delta_{t}^{\beta,*}\hat u_{i,j}^n=\hat p_{i,j}^{n,*},\ for\ (x_i,y_j)\in \Omega_h,1\le n\le N,\\
		&\hat u_{i,j}^0=\hat u_0(x_i,y_j),\ \hat p_{i,j}^0=\hat p_0(x_i,y_j),\ for\ (x_i,y_j)\in \Omega_h,\\
		&\hat u_{i,j}^n=0,\ \hat p_{i,j}^n=0,\ for\ (x_i,y_j)\in\partial\Omega_h, 0\le n\le N,
	\end{align}
\end{subequations}
where $\hat p^n=P^n-P_1^n$, $\hat{u}^n=\bar U^n-U_1^n=U^n-\hat U^n$, and $\hat U_{i,j}^n=(U_1)_{i,j}^n+t_n\phi_{i,j}$.

\begin{theorem}(Stability analysis)
Set $\sigma=1-\frac{\beta}{2}$, $\rho\le 7/4$ and $1\le r< 4/\beta$. Assuming that Properties P1–P4 hold and the maximum time-step size $\tau$ satisfies the requirement stated in Theorem \ref{theorem1}, then, if
$\tau\le \left(8m_c\Gamma(2-\beta)K_4\right)^{-\frac{1}{\beta}}$ and $\tau_1$, $h$ are small,
 we obtain
\begin{align*}
\|\hat p^n\|+\|\nu^{\frac{1}{2}}\nabla_h\hat u^n\|\le C(\|\hat p^0\|+\|\nu^{\frac{1}{2}}\nabla_h\hat u^0\|),\ for\ n\ge 0.
\end{align*}
\end{theorem}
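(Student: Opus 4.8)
The plan is to run a discrete energy argument on the error system \eqref{equation27} and close it with the Gr\"onwall--Mittag-Leffler estimate of Lemma \ref{lemma7}. First I would test \eqref{equation27a} with $\hat p^{n,*}$ and \eqref{equation27b} with $-\nu\Delta_h\hat u^{n,*}$ and add the two identities; using $-(U,\Delta_h V)=(\nabla_h U,\nabla_h V)$, the coupling terms $-\nu(\Delta_h\hat u^{n,*},\hat p^{n,*})$ and $-\nu(\hat p^{n,*},\Delta_h\hat u^{n,*})$ cancel exactly as in the derivation of \eqref{equation7}, leaving the source-free energy identity
\begin{equation*}
(\delta_{t}^{\beta,*}\hat p^{n},\hat p^{n,*})+\nu(\delta_{t}^{\beta,*}\nabla_h\hat u^{n},\nabla_h\hat u^{n,*})=-\big(F(U^{n,*})-F(\hat U^{n,*}),\hat p^{n,*}\big).
\end{equation*}

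With $\zeta^n:=\sqrt{\|\hat p^n\|^2+\nu\|\nabla_h\hat u^n\|^2}$, I would bound the left-hand side from below by applying Lemma \ref{keylemma} with $v_1=\hat p$ and $v_2=\nu^{1/2}\nabla_h\hat u$, which gives $\zeta^{n,*}\delta_t^{\beta,*}\zeta^n$; combined with the standard discrete fractional quadratic (Alikhanov) inequality $\zeta^{n,*}\delta_t^{\beta,*}\zeta^n\ge\frac12\delta_t^{\beta,*}(\zeta^n)^2$, valid under P1--P2 since $\sigma\ge\frac12$, the left-hand side is controlled below by $\frac12\delta_t^{\beta,*}\big[(v_1^n)^2+(v_2^n)^2\big]$, where $v_1^n=\|\hat p^n\|$ and $v_2^n=\nu^{1/2}\|\nabla_h\hat u^n\|$. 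This is precisely the squared quantity in the hypothesis of Lemma \ref{lemma7}.

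For the right-hand side I would first establish a Lipschitz bound for the linearized term. Since $f=\kappa^2\sin(\cdot)$ has $|f'|,|f''|\le\kappa^2$ and Theorem \ref{theorem2} (which is why the hypotheses $\rho\le7/4$, $1\le r<4/\beta$ and small $\tau_1,h$ are imposed) supplies a uniform bound $\max_{j}\|U^j\|_\infty\le C$, the same term-by-term splitting used in \eqref{equation28} yields $\|F(U^{n,*})-F(\hat U^{n,*})\|\le K_3(\|\nabla_h\hat u^n\|+\|\nabla_h\hat u^{n-1}\|)$ with $K_3$ independent of $n$ and of the mesh (Lemma \ref{lemma4} converts the $L^2$ norm of $\hat u$ into $\|\nabla_h\hat u\|$). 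Cauchy--Schwarz then bounds the right-hand side by $K_3\nu^{-1/2}(v_2^n+v_2^{n-1})\|\hat p^{n,*}\|$, and since $\sigma>\frac12$ one has $v_2^n+v_2^{n-1}\le(1-\sigma)^{-1}v_2^{n,*}$ and $\|\hat p^{n,*}\|\le v_1^{n,*}$, so the right-hand side is dominated by $K_4(v_1^{n,*}+v_2^{n,*})^2$ with $K_4:=K_3\nu^{-1/2}(1-\sigma)^{-1}$. Collecting the two sides gives $\delta_t^{\beta,*}\big[(v_1^n)^2+(v_2^n)^2\big]\le 2K_4(v_1^{n,*}+v_2^{n,*})^2$, which is exactly the hypothesis of Lemma \ref{lemma7} with $\lambda=2K_4$; this is where the step-size restriction $\tau\le(8m_c\Gamma(2-\beta)K_4)^{-1/\beta}$ is needed. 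Lemma \ref{lemma7} then produces $v_1^n+v_2^n\le 4E_\beta(4\max\{1,\rho\}m_c\lambda t_n^\beta)(v_1^0+v_2^0)$, and bounding the Mittag-Leffler factor by its value at $t_n=T$ yields the claimed $\|\hat p^n\|+\nu^{1/2}\|\nabla_h\hat u^n\|\le C(\|\hat p^0\|+\nu^{1/2}\|\nabla_h\hat u^0\|)$.

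The main obstacle is securing the uniformity of $K_3$: it must be independent of $n$ and of $\tau_1,h$, which is only possible once the fully discrete solution is known to be uniformly bounded in $L^\infty$, i.e.\ through Theorem \ref{theorem2}; this is the reason the a priori $L^\infty$ estimate is proved beforehand and the extra structural hypotheses on $\rho$ and $r$ are required. A secondary technical point is the passage from the weighted inner-product lower bound delivered by Lemma \ref{keylemma} to the squared form $\delta_t^{\beta,*}(\zeta^n)^2$ demanded by Lemma \ref{lemma7}, which rests on the positivity and monotonicity encoded in P1--P2.
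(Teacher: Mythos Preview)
Your proposal is correct and follows essentially the same approach as the paper's proof: test the error system with $\hat p^{n,*}$ and $-\nu\Delta_h\hat u^{n,*}$, use Theorem \ref{theorem2} to secure a mesh-independent Lipschitz constant $K_3$ for the linearized nonlinearity, reduce the left-hand side to $\frac12\delta_t^{\beta,*}[(v_1^n)^2+(v_2^n)^2]$, bound the right-hand side by $K_4(v_1^{n,*}+v_2^{n,*})^2$, and close with Lemma \ref{lemma7}. The only cosmetic differences are that the paper invokes the vector-valued Alikhanov quadratic inequality (Lemma A.1 of \cite{MR3904430}) directly rather than routing through Lemma \ref{keylemma} first, and it bounds the right-hand side via inequality \eqref{equation9} rather than your $(1-\sigma)^{-1}$ estimate, which leads to a different but equivalent constant $K_4=\tfrac{K_3}{2}\max\{1,\nu^{-1}(1-\sigma)^{-2}\}$.
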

\begin{proof}
	Applying Theorem \ref{theorem2} and mimicking the derivation of inequality \eqref{equation28}, we obtain
	\begin{align*}
		\|F(\hat U^{n,*})-F(U^{n,*})\|\le K_3(\|\nabla_h \hat u^{n}\|+\|\nabla_h\hat u^{n-1}\|).
	\end{align*}
Here, $K_3$ depends on $\sigma$, $\kappa$ and $\max_{0\le j\le N}\|u^j\|_{\infty}$.	Taking the inner product of equation \eqref{equation27a} with $\hat{p}^{n,*}$ and equation \eqref{equation27b} with $-\nu\Delta_h\hat{u}^{n,*}$ , then summing up, we obtain
	\begin{equation}
		\begin{aligned}
			(\delta_{t}^{\beta,*} \hat p^{n},\hat p^{n,*})+\nu(\delta_{t}^{\beta,*}\nabla_h\hat u^n,\nabla_h\hat{u}^{n,*})&=(F(\hat U^{n,*})-F(U^{n,*}),\hat p^{n,*})\\
			&\le K_3 \|\hat p^{n,*}\|(\|\nabla_h \hat u^{n}\|+\|\nabla_h\hat u^{n-1}\|)\\
			&\le  K_4\left[\sigma\sqrt{\|\hat{p}^n\|^2+\nu\|\nabla_h\hat{u}^n\|^2}+(1-\sigma)\sqrt{\|\hat{p}^{n-1}\|^2+\nu\|\nabla_h\hat{u}^{n-1}\|^2}\right]^2\\
			&\le K_4\left[\sigma\|\hat{p}^n\|+(1-\sigma)\|\hat{p}^{n-1}\|+\sigma\|\nu^{\frac{1}{2}}\nabla_h\hat{u}^n\|+(1-\sigma)\|\nu^{\frac{1}{2}}\nabla_h\hat{u}^{n-1}\| \right]^2,
		\end{aligned}		
	\end{equation}
where	$K_4=\frac{K_3}{2}\max\left\{1,\frac{1}{\nu(1-\sigma)^2}\right\}$. The second inequality follows from \eqref{equation9}. Invoking the inequality stated in Lemma A.1 of \cite{MR3904430}, we arrive at, for $n\ge 1$,
\begin{align*}
\delta_{t}^{\beta,*}(\|\hat p^n\|^2+\|\nu^{\frac{1}{2}}\nabla_h\hat u^n\|^2)	\le 2K_4\left[\sigma\|\hat{p}^n\|+(1-\sigma)\|\hat{p}^{n-1}\|+\sigma\|\nu^{\frac{1}{2}}\nabla_h\hat{u}^n\|+(1-\sigma)\|\nu^{\frac{1}{2}}\nabla_h\hat{u}^{n-1}\| \right]^2.
\end{align*}
According to Lemma \ref{lemma7}, it holds that, for $n\ge 1$,
\begin{align*}
\|\hat p^n\|+\|\nu^{\frac{1}{2}}\nabla_h\hat u^n\|\le 	4E_{\beta}(8K_4\max\{1,\rho\}m_c t_n^{\beta})(\|\hat p^0\|+\|\nu^{\frac{1}{2}}\nabla_h\hat u^0\|).
\end{align*}
\end{proof}
\section{Numerical examples}\label{sec:num}
In this section we verify the local convergence orders by two numerical tests.
It has been shown in Lemmas 4  and 6 from \cite{MR3936261} that the  properties P1, P2 and P4 hold on the standard graded mesh $t_n=T(\frac{n}{N})^{r}$ when $\sigma=1-\frac{\beta}{2}$. In addition, P3 holds on this temporal mesh (Theorem 2.2, \cite{MR4270344}). We therefore employ this mesh for the time discretization in all tests.
To reduce computational errors, the integrals defining the coefficients $a_{n,k}$ and $b_{n,k}$ are calculated by Gauss-Kronrod quadrature.
 \begin{example}
 	\label{example1}
 We choose $T=0.5$, $\Omega=(0,1)^2$. The following problem is considered,
 	\begin{subequations}
 	\begin{align}
 		& D_t^{\alpha}u(\bm{x},t)-0.1\Delta u+\sin(u)=g(\bm{x},t),\ (\bm{x},t)\in \Omega\times \left(0,T\right],\\
 		&u(\bm{x},0)=\psi(\bm{x}),\ u_t(\bm{x},0)=\phi(\bm{x}),\ \bm{x}\in \Omega,\\
 		&u(\bm{x},t)=0,\ (\bm{x},t)\in \partial\Omega\times \left[0,T\right],
 	\end{align}
 \end{subequations}
 where $1<\alpha<2$, the forcing term $g(\bm{x},t)$, $\psi(\bm{x})$ and $\phi(\bm{x})$ are up to the exact solution
 $u=(t+t^{\alpha})\sin(\pi x)\sin(\pi y)$.
 \end{example}
 We define $E_L^N=\|\nabla_h \tilde{u}^N\|$, and
 the convergence rate is computed by
 \begin{align*}
 	rate=\log_2{\frac{E_L^N}{E_L^{2N}}}.
 \end{align*}
First, we fix $N=800$ to render the temporal error negligible, so that the spatial error becomes dominant. Table \ref{table1} represents the errors $E_L^N$ with $\alpha=1.5$ and $r=1.5$, and the observed spatial convergence rates coincide with order predicted by Corollary \ref{corollary1}.  Since the spatial error is $O(h^2)$, we set $M=N$. The local temporal convergence orders in Corollary \ref{corollary1} is well reflected by Tables \ref{table2}-\ref{table4}. The numerical experiments also demonstrate that the fully discrete scheme achieves the optimal local convergence order once $r\ge 2$.
\begin{table}[H]
	\centering
	\caption{Errors and convergence rates for Example \ref{example1} (dominated by spatial error).}
		\label{table1}
	\begin{tabular}{ccccccc}
		\toprule
	& $M$ & 4 & 8 & 16 & 32&expected order \\
		\midrule

		&$E_L^N$ & 5.3313e-03 &1.3727e-03 &3.4252e-04 &8.2408e-05   \\
		&rate  &  &  1.9575    &2.0027    &2.0553  &2\\
		\bottomrule
	\end{tabular}
\end{table}

 \begin{table}[H]
 	\centering
 	\caption{Local errors and convergence rates with $r=1$ for Example \ref{example1}.}
 	\label{table2}
 	\begin{tabular}{ccccccc}
 		\toprule
 		$M=N$	& $N$ & 16 & 32 & 64 & 128&expected order \\
 		\midrule
 		\multirow{6}{*}{$E_L^N$}
 		& $\alpha = 1.1$ & 3.1876e-03 & 1.7222e-03 & 8.9359e-04 & 4.5512e-04  \\
 		&  &  & 0.8882 & 0.9466 & 0.9733  &1\\
 		& $\alpha = 1.5$ & 4.3486e-03 & 2.1507e-03 & 1.0670e-03 & 5.3091e-04  \\
 		&  &  & 1.0157 & 1.0112 & 1.0071  &1\\
 		& $\alpha = 1.9$  &1.1931e-03 &5.6317e-04 &2.7287e-04 &1.3419e-04 \\
 		&&&1.0830    &1.0454   & 1.0240&1\\
 		\bottomrule
 	\end{tabular}
 \end{table}

 \begin{table}[H]
	\centering
	\caption{Local errors and convergence rates with $r=1.5$ for Example \ref{example1}.}
	\begin{tabular}{ccccccc}
		\toprule
		$M=N$	& $N$ & 16 & 32 & 64 & 128&expected order \\
		\midrule
		\multirow{6}{*}{$E_L^N$}
		& $\alpha = 1.1$ & 8.1494e-04 &3.2829e-04 &1.2611e-04 &4.7144e-05  \\
		&  &  & 1.3117    &1.3803    &1.4195
		  &1.5\\
		& $\alpha = 1.5$ & 1.6334e-03 &5.7432e-04 &2.0081e-04 &7.0167e-05   \\
		&  &  & 1.5080    &1.5160    &1.5170  &1.5\\
		& $\alpha = 1.9$  &5.3904e-04 &1.8026e-04 &6.0908e-05 &2.0790e-05  \\
		&&& 1.5803    &1.5654    &1.5508&1.5\\
		\bottomrule
	\end{tabular}
\end{table}

\begin{table}[H]
	\centering
	\caption{Local errors and convergence rates with $r=2$ for Example \ref{example1}.}
	\label{table4}
	\begin{tabular}{ccccccc}
		\toprule
		$M=N$	& $N$ & 16 & 32 & 64 & 128&expected order \\
		\midrule
		\multirow{6}{*}{$E_L^N$}
		& $\alpha = 1.1$ & 5.1921e-04 &1.4347e-04 &3.7963e-05 &9.8151e-06   \\
		&  &  &  1.8556    &1.9181    &1.9515
		&2\\
		& $\alpha = 1.5$ & 1.1444e-03 &3.1736e-04 &8.5638e-05 &2.2697e-05   \\
		&  &  &   1.8504    &1.8898   & 1.9158
		 &2\\
		& $\alpha = 1.9$  &4.6356e-04 &1.2709e-04 &3.4371e-05 &9.2109e-06  \\
		&&& 1.8669    &1.8865   & 1.8998
		&2\\
		\bottomrule
	\end{tabular}
\end{table}

 \begin{example}
	\label{example2}
	We consider the problem \eqref{equation1} with $T=0.5$, $L=\pi$, $\kappa=1$ and $\nu=1$, whose exact solution is unknown. The initial data is set to $\psi(\bm{x})=\exp(\cos(x)\cos(y))$ and $\phi(\bm{x})=0$.
\end{example}
Since the exact solution is unknown, the error is calculated by the two-mesh method\cite{MR1750671}; specifically, we define
\begin{align*}
	E_L^N=\|\nabla_h (U^N-V^{2N})\|,
\end{align*}
where $V^n$ is the numerical solution on the mesh $t_n=T(\frac{n}{2N})^r$.
Tables \ref{6.5}-\ref{6.7} show the local errors and convergence rates for Example \ref{example2} using different $r$ and $\alpha$, which are consistent with Corollary \ref{corollary1} as well.
\begin{table}[H]
	\centering
	\caption{Local errors and convergence rates with $r=1$  for Example \ref{example2}.}
	\label{6.5}
	\begin{tabular}{cccccccc}
		\toprule
		$M=25$	& $N$ & 32 & 64 & 128 & 256&512&expected order \\
		\midrule
		\multirow{6}{*}{$E_L^N$}
		& $\alpha = 1.1$ & 1.0657e-02 &5.2836e-03 &2.6337e-03 &1.3154e-03 &6.5741e-04     \\
		&  &  &    1.0122   & 1.0044   & 1.0016   & 1.0006
		&1\\
		& $\alpha = 1.5$ & 4.2853e-02 &1.9864e-02 &9.5653e-03 &4.6928e-03 &2.3240e-03    \\
		&  &  &   1.1093    &1.0542   & 1.0274    &1.0139
		&1\\
		& $\alpha = 1.9$  &3.3683e-01 &1.4621e-01 &6.7578e-02 &3.2411e-02& 1.5860e-02    \\
		&&&  1.2040    &1.1134   & 1.0601    &1.0311
		&1\\
		\bottomrule
	\end{tabular}
\end{table}

\begin{table}[H]
	\centering
	\caption{Local errors and convergence rates with $r=1.5$ for Example \ref{example2}.}
	\begin{tabular}{cccccccc}
		\toprule
		$M=25$	& $N$ & 32 & 64 & 128 & 256&512 &expected order \\
		\midrule
		\multirow{6}{*}{$E_L^N$}
		& $\alpha = 1.1$ & 3.0512e-03 &1.0309e-03 &3.5646e-04 &1.2489e-04 &4.4052e-05     \\
		&  &  &      1.5654   & 1.5321    &1.5131    &1.5033
		&1.5\\
		& $\alpha = 1.5$ & 1.8113e-02 &5.3896e-03 &1.6728e-03 &5.3588e-04 &1.7621e-04      \\
		&  &  &   1.7488    &1.6879    &1.6423    &1.6047
		&1.5\\
		& $\alpha = 1.9$  &1.8030e-01 &5.1088e-02 &1.4906e-02 &4.4824e-03 &1.3882e-03    \\
		&&&
		1.8193    &1.7771    &1.7335   & 1.6911
		&1.5\\
		\bottomrule
	\end{tabular}
\end{table}

\begin{table}[H]
	\centering
	\caption{Local errors and convergence rates with $r=2$  for Example \ref{example2}.}
	\label{6.7}
	\begin{tabular}{cccccccc}
		\toprule
		$M=25$	& $N$ & 32 & 64 & 128 & 256&512 &expected order \\
		\midrule
		\multirow{6}{*}{$E_L^N$}
		& $\alpha = 1.1$ & 2.2028e-03 &5.6130e-04 &1.4248e-04 &3.5994e-05 &9.0601e-06      \\
		&  &  &   1.9725   & 1.9780    &1.9849   & 1.9902
		&2\\
		& $\alpha = 1.5$ & 1.8519e-02 &4.6949e-03 &1.2080e-03 &.1081e-04 &7.9682e-05     \\
		&  &  &   1.9798    &1.9584   & 1.9586    &1.9637
		
		&2\\
		& $\alpha = 1.9$  &1.9796e-01 &5.0357e-02 &1.2767e-02 &3.2349e-03 &8.1944e-04     \\
		&&&
		1.9749   & 1.9798    &1.9806   & 1.9810
		&2\\
		\bottomrule
	\end{tabular}
\end{table}

\bibliographystyle{unsrt}
\bibliography{ref}

 \end{document}